\theoremstyle{plain}
 \newtheorem{thm}{Theorem}[section]
 \newtheorem{prop}{Proposition}[section]
 \newtheorem{lem}{Lemma}[section]
 \newtheorem{cor}{Corollary}[section]
\theoremstyle{definition}
\numberwithin{equation}{section}
\def\ji {\char'032}
\def\ja {\char'037}
\def\m  {\char'176}
\font\srit=wncyi8
 \font\srrm=wncyr8
\newcommand{\R}{\mathbb{R}}
\newcommand{\C}{\mathbb{C}}
\newcommand{\zkj}{\bar{z}_j}
\newcommand{\dzj}{\frac{\partial}{\partial z_j}}
\newcommand{\dzkj}{\frac{\partial}{\partial\bar{z}_j}}
\newcommand{\ds}{\displaystyle}
\newcommand{\T}{\mathbb{T}}
\DeclareMathOperator{\tr}{\mathrm{tr}}
\newcommand{\ad}{\mathrm{ad}}
\DeclareMathOperator{\diag}{\mathrm{diag}}
\DeclareMathOperator{\pr}{\mathrm{pr}}
\DeclareMathOperator{\orth}{\mathrm{orth}}
\begin{document}
\title[CONTACT FLOWS AND INTEGRABLE SYSTEMS]
{CONTACT FLOWS AND INTEGRABLE SYSTEMS}

\author{ Bo\v zidar Jovanovi\'c \and
Vladimir Jovanovi\'c}

\keywords{Contact systems, noncommutative integrability,
hypersurfaces of contact type, partial integrability, constraints,
Brieskorn manifolds. \emph{MSC.} 37J55, 37J35, 70H06, 70H45}

\maketitle

\centerline{\small Mathematical Institute SANU, Serbian Academy of
Sciences and Arts} \centerline{\small Kneza Mihaila 36, 11000
Belgrade, Serbia} \centerline{\small E-mail: bozaj@mi.sanu.ac.rs }

\

\centerline{\small Faculty of Sciences, University of Banja Luka}
\centerline{\small Mladena Stojanovi\'ca 2, 51000 Banja Luka,
Bosnia and Herzegovina}\centerline{\small E-mail: vlajov@blic.net}

\

\begin{abstract}
We consider Hamiltonian systems restricted to the hypersurfaces of
 contact type and obtain a partial version of the
Arnold-Liouville theorem: the system not need to be integrable on
the whole phase space, while the invariant hypersurface is
foliated on an invariant Lagrangian tori. In the second part of
the paper we consider contact systems with constraints. As an
example, the Reeb flows on Brieskorn manifolds are considered.
\end{abstract}

\tableofcontents

\section{Introduction}

Usually, integrable systems are considered within a framework of
symplectic or Poisson geometry, but there is a well defined
non-Hamiltonian (e.g., see \cite{Bo, Zu}) as well as a contact
setting studied in \cite{BM, Web, Le, KT, Jo, Boyer}. The aim of
this paper is to stress some natural applications of contact
integrability to the Hamiltonian systems and to provide examples
of contact integrable flows.

In the first part of the paper we consider Hamiltonian systems
restricted to the hypersurfaces of contact type and obtain a
partial version of the Arnold-Liouville theorem: the system need
not be integrable on the whole phase space, while the invariant
hypersurface is foliated on invariant Lagrangian tori with
quasi-periodic dynamics. The construction can also be applied to
the partially integrable systems and the systems of
Hess–-Appel'rot type, where the invariant hypersurface is foliated
on Lagrangian tori, but not with quasi-periodic dynamics \cite{DG,
Jo1, DGJ}.

In the second part of the paper we consider contact systems with
constraints and derive a contact version of Dirac's construction
for constrained Hamiltonian systems. As an example, the Reeb flows
on Brieskorn manifolds are considered. From the point of view of
integrability, those Reeb flows are very simple. However, we use
them to  clearly demonstrate the use of Dirac's construction and
to interpret the regularity of the Reeb flows within a framework
of contact integrability.

For the completeness and clarity of the exposition, in Subsections
1.1, 1.2 and 2.1 we recall basic definitions in contact geometry,
the notion of contact noncommutative integrability, and the
Maupertuis--Jacobi metric, respectively.

\subsection{Contact flows and the Jacobi bracket}   A {\it contact form}
$\alpha$ on a $(2n+1)$-dimensional ma\-nifold $M$ is a Pfaffian form
satisfying $\alpha\wedge(d\alpha)^n\ne0$. By a {\it contact
manifold} $(M,\mathcal H)$ we mean a connected
$(2n+1)$-dimensional manifold $M$ equipped with a nonintegrable
{\it contact} (or {\it horizontal}) {\it distribution} $\mathcal
H$, locally defined by a contact form: $\mathcal
H\vert_U=\ker\alpha\vert_U$, $U$ is an open set in $M$ \cite{LM}.
Two contact forms $\alpha$ and $\alpha'$ define the same contact
distribution $\mathcal H$ on $U$ if and only if $\alpha'=a\alpha$
for some nowhere vanishing function $a$ on $U$.

The condition $\alpha\wedge(d\alpha)^n\ne0$ implies that the form
$d\alpha\vert_x$ is nondegenerate (symplectic) structure
restricted to $\mathcal H_x$. The conformal class of
$d\alpha\vert_x$ is invariant under the change $\alpha'=a\alpha$.
If $\mathcal V$ is a linear subspace of $\mathcal H_x$, then we
have well defined its orthogonal complement in $\mathcal H_x$ with
respect to $d\alpha\vert_x$, as well as the notion of the {\it
isotropic} ($\mathcal V \subset \orth_\mathcal H \mathcal V$),
{\it coisotropic} ($\mathcal V \supset \orth_\mathcal H \mathcal
V$) and the {\it Lagrange} subspaces ($\mathcal V =\orth_\mathcal
H \mathcal V$) of $\mathcal H_x$ . A submanifold $N\subset M$ is
{\it pre-isotropic} if it transversal to $\mathcal H$ and if
$\mathcal G_x=T_x N \cap\mathcal H_x$ is an isotropic subspace of
$\mathcal H_x$, $x\in N$. The last condition is equivalent to the
condition that distribution $\mathcal G=\bigcup_x G_x$ defines a
foliation. It is {\it pre-Legendrian} submanifold if it is of
maximal dimension $n+1$, that is $\mathcal G$ is a Lagrangian
subbundle of $\mathcal H$.

A {\it contact diffeomorphism} between contact manifolds
$(M,\mathcal H)$ and $(M',\mathcal H')$ is a diffeomorphism $\phi:
M\to M'$ such that $\phi_*\mathcal H=\mathcal H'$. An {\it
infinitesimal automorphism} of a contact structure $(M,\mathcal
H)$ is a vector field $X$, called a {\it contact vector field}
such that its local 1-parameter group is made of contact
diffeomorphisms. Locally, if $\mathcal H=\ker\alpha$, then
$\mathcal L_X\alpha=\lambda\alpha$, for some smooth function
$\lambda$.

From now on, we consider {\it co-oriented (or strictly) contact
manifolds} $(M,\alpha)$, where contact distributions $\mathcal H$
are associated  to globally defined contact forms $\alpha$. The
{\it Reeb vector field} $Z$ is a vector field uniquely defined by
\begin{equation}\label{REEB}
i_Z\alpha=1, \qquad i_Z d\alpha=0.
\end{equation}

The tangent bundle $TM$ and the cotangent bundle $T^*M$ are
decomposed into $TM=\mathcal Z \oplus \mathcal H$ and
$T^*M=\mathcal H^0\oplus \mathcal Z^0$,
 where $\mathcal Z=\R Z$ is the kernel of
$d\alpha$, $\mathcal Z^0$ and $\mathcal H^0=\R\alpha$ are the
annihilators of $\mathcal Z$ and $\mathcal H$, respectively.  The
sections of $\mathcal Z^0$ are called {\it semi-basic forms}.
Whence, we have decompositions of vector fields and 1-forms
\begin{equation}
X=(i_X\alpha)Z+\hat X, \qquad \eta=(i_Z\eta)\alpha+\hat \eta,
\label{decomp*}
\end{equation}
where $\hat X$ is horizontal and $\hat\eta$ is semi-basic.

The mapping $ \alpha^\flat: X \mapsto -i_X d\alpha$ carries $X$
into a semi-basic form. The form $d\alpha$ is non-degenerate on
$\mathcal H$ and the restriction of $\alpha^\flat$ to horizontal
vector fields is an isomorphism whose inverse will be denoted by
$\alpha^\sharp$. The vector field
\begin{equation}
Y_f = f Z+\hat{Y}_f, \qquad \hat{Y}_f=\alpha^\sharp(\widehat{df}).
\label{iso}
\end{equation}
is a contact vector field ($\mathcal L_{Y_f}\alpha=Z(f)\alpha$)
and
\begin{equation}\label{eq_f}
\dot x=Y_f
\end{equation}
is called the {\it contact Hamiltonian equation} corresponding to
$f$. Notice that $Z=Y_1$ and $f=i_{Y_f}\alpha$.

The mapping $f\mapsto Y_f$ is a Lie algebra isomorphism
($Y_{[f,g]}=[Y_f,Y_g]$) between the set $C^\infty(M)$ of smooth
functions on $M$ and the set $\mathcal N$ of infinitesimal contact
automorphisms. Here, the {\it Jacobi bracket} $[\cdot,\cdot]$ on
$C^\infty(M)$ is defined by (see \cite{LM})
\begin{equation}\label{der}
[f,g]=d\alpha(Y_f,Y_g)+fZ(g)-gZ(f)={Y_f}(g)- g Z(f).
\end{equation}

The Jacobi bracket does not satisfy the Leibniz rule. However,
within the class of functions that are integrals of the Reeb flow, $
C^\infty_\alpha(M)=\{f\in C^\infty(M)\,\vert\, Z(f)=[1,f]=0\}, $
the Jacobi bracket has the usual properties of the Poisson bracket:
$g$ is an integral of \eqref{eq_f} if and only if $[g,f]=0$ and if
$g_1$ and $g_2$ are two integrals of \eqref{eq_f}, then
$[g_1,g_2]$ is also an integral.

\subsection{Contact integrability}
A Hamiltonian system on $2n$-dimensional symplectic manifold
$(P,\omega)$ is {\it noncommutatively integrable} if it has $2n-r$
almost everywhere independent integrals $F_1,F_2,\dots,F_{2n-r}$
and $F_1,\dots,F_r$ commute with all integrals
\begin{equation}\label{nehorosev}
\{F_i,F_j\}=0, \quad j=1,\dots,r, \quad i=1,\dots,2n-r.
\end{equation}
Regular compact connected invariant manifolds of the system are
isotropic tori, generated by Hamiltonian flows of $F_1,\dots,F_r$.
In a neighborhood of a regular torus, there exist canonical {\it
generalized action--angle coordinates} such that integrals $F_i$,
$i=1,\dots,r$ depend only on actions and the flow is translation
in angle coordinates (see Nekhoroshev \cite{N} and Mishchenko and
Fomenko \cite{MF}). If $r=n$ we have the usual commutative
integrability described in the Arnold-Liouville theorem \cite{Ar}.

Noncommutative integrability of contact flows  can be stated in
the following form (see \cite{Jo}). Suppose we have a collection
of integrals $f_1,f_2,\dots,f_{2n-r}$ of equation \eqref{eq_f} on
a $(2n+1)$-dimensional co-oriented contact manifold $(M,\alpha)$,
where $f=f_1$ or $f=1$ and
\begin{equation}\label{involucija}
[1,f_i]=0, \quad [f_i,f_j]=0, \quad i=1,\dots,2n-r, \quad
j=1,\dots, r.
\end{equation}

Let $T$ be a compact connected component of the level set $
\{f_1=c_1,\dots,f_{2n-r}=c_{2n-r}\} $ and $df_1 \wedge \dots
\wedge df_{2n-r}\ne 0 $ on $T$. Then $T$ is diffeomorphic to a
pre-isotropic $r+1$-dimensional torus $\mathbb T^{r+1}$. There
exist a neighborhood $U$ of $T$ and a diffeomorphism $\phi: U\to
\mathbb T^{r+1} \times D$,
\begin{equation}\label{action-angle}
\phi(x)=(\theta,y,x)=(\theta_0,\theta_1,\dots,\theta_r,y_1,\dots,y_r,x_1,\dots,x_{2s}),
\quad s=n-r,
\end{equation}
where $D\subset \R^{2n-r}$, such that

(i) $\alpha$ has the following canonical form
\begin{equation}\label{canonical}
\alpha_0=(\phi^{-1})^*\alpha=
y_0d\theta_0+y_1d\theta_1+\dots+y_rd\theta_r+g_1dx_1+\dots+g_{2s}
dx_{2s},
\end{equation}
where $y_0$ is a smooth function of $y$ and $g_1,\dots,g_{2s}$
are functions of $(y,x)$;

(ii) the flow of $Y_f$ on invariant tori is quasi-periodic
\begin{equation}\label{namotavanje}
(\theta_0,\theta_1,\dots,\theta_r) \longmapsto
(\theta_0+t\omega_0,\theta_1+t\omega_2,\dots,\theta_r+t\omega_r),
\quad t\in\R,
\end{equation}
where the frequencies $\omega_0,\dots,\omega_r$ depend only on
$y$.

Note that we can define the notion of noncommutative integrability
of contact equations $\dot x=X$ on contact manifolds $(M,\mathcal
H)$ without involving globally defined contact form $\alpha$ (for more
details see \cite{Jo}).

The case $r=n$ corresponds to the contact commutative integrability
defined by Banyaga and Molino, where the invariant tori are
pre-Legendrian \cite{BM} (see also \cite{KT, Boyer}). A slightly
different notion of (commutative) contact integrability in terms
of the {\it modified Poisson} (or the {\it Jacobi-Mayer}) {\it
bracket} $(\cdot,\cdot)$ \cite{Gr},
$$
(f,g)\, \alpha\wedge (d\alpha)^n=n\, df\wedge dg
\wedge\alpha\wedge(d\alpha)^{n-1},
$$
is given by Webster \cite{Web}. Note that the modified Poisson
bracket satisfies the Leibniz rule, but does not satisfy the
Jacobi identity.

\subsection{Outline and results of the paper}
In Section 2 we firstly consider integrable Hamiltonian systems
restricted to the hypersurfaces of contact type and prove that the
appropriate Reeb flows are examples of contact integrable systems
(Proposition \ref{prva}). This is a contact analogue of the
construction of integrable geodesic flows by the use of  the
Maupertuis principle \cite{BKF}. As an example, a contact
algebraic hypersurface of degree 4 in $(\R^{2n+2}(q,p), dp\wedge
dq)$ with an integrable Reeb flow associated to the
Gelfand--Cetlin system on $u(n+1)$ is given (Proposition 2.1).

Proposition \ref{prva} is a special case of a more general
statement on isoenergetic integrability (Theorem \ref{druga},
Corollary \ref{IAL}), where the system doesn't need to be integrable
on the whole phase space, but the relation \eqref{nehorosev} holds
on a fixed isoenergetic hypersurface $M$ of contact type. Then, as
in the case of the usual (non-commutative) integrability, $M$ is
foliated on invariant (isotropic) Lagrangian tori with
quasi-periodic dynamics.

The construction of partial integrals for natural mechanical
systems on a fixed isoenergetic hypersurface is studied by
Birkhoff \cite{Bir0, Bir}. Although the problem is classical and
some variants of restricted integrability on invariant manifolds
are already studied (see  \cite{N3} and references therein),
Theorem \ref{druga} is a benefit of a noncommutative contact
integrability given in \cite{Jo}.

When we interchange the role of the Hamiltonian function and one
of the integrals we obtain the situation closely related to the
partial integrability and the systems of Hess–-Appel'rot type
\cite{DG, Jo1, DGJ}, where the invariant manifolds are foliated on
Lagrangian tori, but not with quasi-periodic dynamics (see
Corollary \ref{partial}).

In Section 3 we consider contact flows with constraints and derive
the contact version of Dirac's construction for constrained
Hamiltonian systems (Theorem \ref{opsta}). As an example of such a
construction, we start in Section 4 from a $K$-contact sphere
$S^{2n+1}$ (see \cite{Y}) with a contact integrable Reeb flow
(Proposition \ref{sfera}). Then, in the rank 1 case, we consider
the restriction to a well known codimension 2 Brieskorn
submanifold (see \cite{Lutz, BG}). It appears that the
construction presented in Theorem \ref{opsta} provides an
alternative proof that the Brieskorn manifold is a co-oriented
contact manifold. It is well known that all the trajectories of
the corresponding Reeb flow are closed and, therefore, the system
is integrable in a noncommutative sense. Here, we describe the
Jacobi bracket within the corresponding Lie algebra of integrals
(Theorem \ref{treca}).

Further, for a class of the Brieskorn manifolds diffeomorphic to
the standard spheres $S^{4m+1}$, $m\in\mathbb N$, and contact
flows studied by Ustilovsky \cite{U}, we prove contact
noncommutative integrability of the flows with generic invariant
pre-isotropic tori of dimension $m+1$ (Proposition \ref{UB}).

Finally, we note that the integrability of the Reeb flow on a
sphere $S^{2n+1}$ is a particular case of the integrability of the
Reeb flows on compact $K$-contact manifolds (Proposition
\ref{K-contact}).

\section{Isoenergetic and partial integrability}

\subsection{Hypersurfaces of contact type}
Let $(P,\omega)$ be a symplectic $2n$-dimen\-sional manifold. Let
$H$ be a smooth function on $P$. Consider the Hamiltonian equation
\begin{equation}\label{2}
\dot x=X_H,
\end{equation}
where the Hamiltonain vector field $X_H$ is defined by
\begin{equation}\label{Hamiltonian}
i_{X_H}\omega (\,\cdot\,)=\omega(X_H,\,\cdot\,)=-dH(\,\cdot\,).
\end{equation}

The Hamiltonian $H$ is the first integral of the system. Let $M$
be a regular connected component of the invariant variety $H=h$
($dH\vert_M \ne 0$). Since $dH(\xi)=0$, $\xi\in T_x M$, from
\eqref{Hamiltonian} we see that $X_H$ generates the symplectic
orthogonal of $T_x M$ for all $x\in M$ -- the {\it characteristic
line bundle} $\mathcal L$ of $M$. It is exactly the kernel of the
form $\omega$ restricted to $M$. Note that $\mathcal L$ is
determined only by $M$ and not by $H$. If $F$ is an another
Hamiltonian defining $M$, $M\subset F^{-1}(c)$, $dF\vert_M\ne 0$,
then the restrictions of Hamiltonian vector fields $X_H$ and $X_F$
to $M$ are proportional.

An orientable hypersurface $M$ of a symplectic manifold
$(P,\omega)$ is of {\it contact type}, if there exists a 1-form
$\alpha$ on $M$ satisfying
$$
d\alpha=j^*\omega, \quad \alpha(\xi)\ne 0, \, \xi\in \mathcal L_M,
\, \xi\ne 0,
$$
where $j: M\to P$ is the inclusion (see Weinstein \cite{We}). If
$(M,\alpha)$ is of contact type, we see, owing to
$\mathcal L=\ker\omega_M$, that $\mathcal H=\{\xi\in T_x M\, \vert\, \alpha(\xi)=0, \, x\in M\}$
is a $(2n-2)$-dimensional nonintegrable distribution on which
$d\alpha=\omega$ is nondegenerate. Consequently, $\alpha\wedge
d\alpha^{n-1}$ is a volume form on $M$ and $(M,\mathcal H)$ is a
co-oriented contact manifold.

Note that, since the corresponding Reeb vector field \eqref{REEB}
is a section of $\ker d\alpha$, it is proportional to
$X_H\vert_M$. Therefore, the Reeb flow, up to a time
reparametrization, coincides with the Hamiltonian flow restricted
to $M$. In particular, a closed Reeb orbit is a closed orbit of
the Hamiltonian flow, and this was the motivation for introducing
the concept of contact type hypersurfaces \cite{We} (e.g., see
\cite{AFKP, HZ}).

In the case of an exact symplectic manifold $\omega=d\alpha$, $M$
is of contact type with respect to $\alpha$ if
$\alpha(X_H)\vert_M\ne 0$. Then $\alpha$ has no zeros in some open
neighborhood of $M$. There exists a unique vector field $E$
without zeros such that
\begin{equation}\label{liouville}
i_{E} \omega=\alpha.
\end{equation}
From Cartan's formula, \eqref{liouville} is equivalent to
$\mathcal L_E\omega=\omega, $ i.e., $E$ is the {Liouville vector
field} of $\omega$. $M$ is of contact type with respect to
$\alpha$ if and only if the Liouville vector field is transverse
to $M$, i.e., $E(H)\vert_M \ne 0$ (e.g., see Libermann and Marle
\cite{LM}). It can be shown that the Reeb vector field is given by
\begin{equation}\label{skaliranje}
Z=X_H/E(H)\vert_M.
\end{equation}

For example, the Reeb vector field $Z$ coincides with the
restrictions to $M$ of Hamiltonian vector fields of the functions
(e.g., see \cite{Jo2})
\begin{eqnarray}
&& H_0=\frac{H-h}{E(H)}  \qquad (H_0\vert_M=0),\\
&& \label{Jacobi-transf} H_{MJ}=\frac{E(H)}{4h-4H+2E(H)} \qquad
(H_{MJ}\vert_M=\frac12).
\end{eqnarray}

The transformation $H\longmapsto H_0$ is a variation of Moser's
regularization of Kepler's problem \cite{Mo, Jo2}. On the other
hand, given a natural mechanical system
$(Q,\langle\cdot,\cdot\rangle,V)$,
\begin{equation*}\label{HG}
H(q,p)=\frac12 \langle p,p\rangle +V(q)=\frac12 \sum_{ij}
K^{ij}p_ip_j+V(q),
\end{equation*}
and an isoenergetic surface $ M_h =H^{-1}(h)$, $h>\max_Q V$, the
function \eqref{Jacobi-transf}
$$
H_{MJ}(q,p)={\langle p,p\rangle }/(4(h-V(q)))
$$
is the Hamiltonian of the geodesic flow of the Maupertuis--Jacobi
metric $ds^2_{MJ}=2(h-V(q))ds^2$ on $Q$. Here $\alpha=pdq$ is the
canonical 1-form and $E=\sum_i p_i {\partial}/{\partial p_i}$  is
the {\it standard Liouville vector field} on $T^*Q(q,p)$.

\subsection{Isoenergetic integrability}
It is well known that the standard metrics on a rotational surface
and on an ellipsoid have the geodesic flows integrable by means of
an integral polynomial in momenta of the first (Clairaut) and the
second degree (Jacobi), respectively \cite{Ar}. The natural
question is the existence of metrics on a sphere $S^2$ with
polynomial integral which can't be reduced to a li\-near or a
quadratic one. The first examples, the Kovalevskaya $ds^2_K$ and
Goryachev--Chaplygin $ds^2_{GC}$ metrics with additional integrals
of 4-th and 3-rd degrees, are given by Bolsinov, Kozlov and
Fomenko (see \cite{BKF}). Namely, the motion of a rigid body about
a fixed point in the presence of the gravitation field admits
$SO(2)$--reduction (rotations about the direction of gravitational
field). Taking the integrable Kovalevskaya and
Goryachev--Chaplygin cases we get integrable systems on $T^*S^2$.
The metrics $ds^2_K$ and $ds^2_{GC}$ are then the appropriate
Maupertuis--Jacobi metrics on the sphere.

The following statement is a contact generalization of the
construction given in \cite{BKF}.

Let $\{\cdot,\cdot\}$ be the canonical Poisson bracket on a
symplectic manifold $(P,\omega)$.

\begin{prop}\label{prva}
Suppose that the Hamiltonian equations \eqref{2} are completely
integrable in a noncommutative sense with respect to the integrals
$F_1=H,\dots,F_{2n-r}$ satisfying \eqref{nehorosev}. Let
$M=H^{-1}(h)$ be a contact type hypersurface, such that the
restrictions $F_2\vert_M,\dots, F_{2n-r}\vert_M$ are independent.
Then the Reeb flow on $M$ is contact completely integrable in a
noncommutative sense with respect to the integrals
$F_2\vert_M,\dots, F_{2n-r}\vert_M$.
\end{prop}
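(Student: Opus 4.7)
My plan is to verify the two involutivity conditions in \eqref{involucija} for the Reeb equation $\dot x = Z = Y_1$ with the candidate integrals $F_2\vert_M,\ldots,F_{2n-r}\vert_M$ and commuting subfamily $F_2\vert_M,\ldots,F_r\vert_M$. A dimension count confirms the setup: $M$ has dimension $2n-1 = 2n'+1$ with $n' = n-1$; the number of proposed integrals is $2n-r-1 = 2n'-(r-1)$; and the commutativity parameter $r' = r-1$ leaves exactly the right number of ``central'' integrals. Functional independence of $F_2\vert_M,\ldots,F_{2n-r}\vert_M$ on $M$ is part of the hypothesis.

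The Reeb-invariance condition $[1,F_i\vert_M]=0$ is immediate: from $\{F_i,H\}=0$ we get $X_H(F_i)=0$, and by \eqref{skaliranje} the Reeb field $Z$ is proportional to $X_H\vert_M$, so $Z(F_i\vert_M)=0$. In particular every $F_i\vert_M$ belongs to $C^\infty_\alpha(M)$.

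The crux is the identity
\[
    [F\vert_M,\,G\vert_M] \;=\; \{F,G\}\vert_M
\]
valid for $F,G\in C^\infty(P)$ with $\{F,H\}=0$. To establish it, note that such $X_F$ is tangent to $M$, so pulling back $i_{X_F}\omega=-dF$ via the inclusion $j\colon M\hookrightarrow P$ yields $i_{X_F\vert_M}d\alpha = -d(F\vert_M)$. Writing $X_F\vert_M = \alpha(X_F\vert_M)Z + \widehat{X_F\vert_M}$ according to \eqref{decomp*} and observing that $d(F\vert_M)$ is semi-basic (because $F\vert_M\in C^\infty_\alpha(M)$), the isomorphism $\alpha^\sharp$ of \eqref{iso} identifies the horizontal part of $X_F\vert_M$ with $\hat{Y}_{F\vert_M}$. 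Consequently $X_F\vert_M$ and $Y_{F\vert_M}$ differ only by a multiple of $Z$, and that multiple annihilates the Reeb-invariant function $G\vert_M$; the identity then follows from \eqref{der} using $Z(F\vert_M)=0$.

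Applying this identity with $F=F_j$ ($2\le j\le r$) and $G=F_i$ gives $[F_j\vert_M,F_i\vert_M]=0$, completing the verification of \eqref{involucija}. The step needing the most care is the horizontal/vertical bookkeeping in the key identity: the Reeb component $\alpha(X_F\vert_M)$ of the Hamiltonian field need not equal $F\vert_M$ (so $X_F\vert_M\ne Y_{F\vert_M}$ in general), and one must check that this discrepancy is harmless precisely because it couples to the vanishing term $Z(G\vert_M)$.
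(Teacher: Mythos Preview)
Your argument is correct and follows essentially the same route as the paper, which actually derives Proposition~\ref{prva} as the special case of the more general Theorem~\ref{druga}: the key step in both is to pull back $i_{X_{F_i}}\omega=-dF_i$ to $M$, compare with $i_{\hat Y_{f_i}}d\alpha=-df_i$, and conclude that $Y_{f_i}$ and $\tilde X_{F_i}$ differ by a multiple of $Z$, whence $[f_k,f_i]=\tilde X_{F_k}(f_i)=j^*\{F_i,F_k\}$. Two cosmetic remarks: your displayed identity tacitly also uses $\{G,H\}=0$ (needed for $Z(G\vert_M)=0$), which of course holds for every $F_i$; and with the paper's conventions the identity reads $[F\vert_M,G\vert_M]=\{G,F\}\vert_M$, a sign that is immaterial for the vanishing conclusion.
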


We can say that Proposition \ref{prva} is something that one could
expect. It follows from a more general and slightly unexpected
statement.

\begin{thm}[Isoenergetic integrability]\label{druga}
Let $M=H^{-1}(h)$ be a contact type isoenergetic submanifold of
the Hamiltonian equations \eqref{2} with respect to $\alpha$:
$j^*\omega=d\alpha$, where $j: M\to P$ is the associated
inclusion. Suppose a collection of functions
$F_1=H,\dots,F_{2n-r}$ satisfy relations \eqref{nehorosev} on the
isoenergetic hypersurfce $M$ and that the restrictions
$f_2=F_2\circ j,\dots, f_{2n-r}=F_{2n-r}\circ j$ are independent.
Then the Reeb flow on $M$
\begin{equation}\label{RF}
\dot x=Z=\frac{1}{\alpha(X_H)}X_{H}
\end{equation}
is contact completely integrable in a noncommutative sense with
respect to the integrals $f_2,\dots,f_{2n-r}$. In other words, the
regular compact connected components of the invariant level sets
\begin{equation}\label{nivo}
T=T_{c_2,\dots,c_{2n-r}}: \qquad H=h,\quad F_2=c_2,\quad
\dots,\quad F_{2n-r}=c_{2n-r}
\end{equation}
are $r$--dimensional isotropic tori of $(P,\omega)$, or
pre-isotropic tori considered on $(M,\alpha)$, spanned by the
contact commuting vector fields
\begin{equation}\label{horY2}
Y_1=Z, \quad Y_k=\frac{F_k-\alpha(X_{F_k})}{\alpha(X_H)}
X_{H}+X_{F_k}, \qquad k=2,\dots,r.
\end{equation}
\end{thm}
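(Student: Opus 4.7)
My plan is to verify the hypotheses of the contact noncommutative integrability criterion \eqref{involucija} recalled from \cite{Jo} in Subsection 1.2, applied to the restrictions $f_k=F_k\circ j$, $k=1,\dots,2n-r$, on $(M,\alpha)$. The scaling \eqref{skaliranje} gives $Z=X_H/\alpha(X_H)$ on $M$, whence $Z(f_i)=\{H,F_i\}/\alpha(X_H)=0$ on $M$ by \eqref{nehorosev}; this yields the commutation with $f_1=1$ in \eqref{involucija}. The remaining involution $[f_j,f_i]=0$, $j=1,\dots,r$, reduces via \eqref{der} to computing $Y_{f_j}(f_i)$, so the decisive step is to identify the explicit vector fields \eqref{horY2} with the intrinsic contact Hamiltonian fields on $M$.

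To carry out that identification I would first check that each $Y_k$ in \eqref{horY2} is tangent to $M$: the relation $dH(X_H)=0$ coming from \eqref{Hamiltonian} kills the first term, while $dH(X_{F_k})=\{F_k,H\}=0$ on $M$ kills the second. Next, a direct cancellation of the two occurrences of $\alpha(X_{F_k})$ in \eqref{horY2} gives $\alpha(Y_k)=F_k|_M=f_k$. Finally, viewing $Y_k$ as a vector field on $M$ and using $j^*\omega=d\alpha$ together with $j^*dH=d(h)=0$, I compute
\[
i_{Y_k}d\alpha \;=\; j^*(i_{Y_k}\omega)
 \;=\; -\frac{F_k-\alpha(X_{F_k})}{\alpha(X_H)}\,j^*dH \;-\; j^*dF_k \;=\; -df_k .
\]
Cartan's formula then gives $\mathcal L_{Y_k}\alpha = i_{Y_k}d\alpha+d(i_{Y_k}\alpha)=-df_k+df_k=0=Z(f_k)\alpha$, so the pair of conditions $\alpha(Y_k)=f_k$ and $\mathcal L_{Y_k}\alpha=Z(f_k)\alpha$ identifies $Y_k$ with the contact Hamiltonian field $Y_{f_k}$ attached to $f_k$.

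With this identification in hand, the Jacobi bracket from \eqref{der} becomes
\[
[f_j,f_i] \;=\; Y_{f_j}(f_i)-f_i Z(f_j) \;=\; dF_i(Y_j)\big|_M \;=\; \tfrac{F_j-\alpha(X_{F_j})}{\alpha(X_H)}\{H,F_i\}+\{F_j,F_i\},
\]
both terms vanishing on $M$ by \eqref{nehorosev} for $j=1,\dots,r$, $i=1,\dots,2n-r$. Independence of $df_2,\dots,df_{2n-r}$ on $M$ is assumed, so the contact noncommutative integrability statement of Subsection 1.2 applies to $f_1=1,f_2,\dots,f_{2n-r}$, producing pre-isotropic invariant $(r+1)$-tori with quasi-periodic Reeb dynamics and the normal form \eqref{canonical}--\eqref{namotavanje}. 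The vector fields $Y_1=Z,Y_2,\dots,Y_r$ commute pairwise (vanishing Jacobi brackets and $Y_{[f,g]}=[Y_f,Y_g]$), preserve each level \eqref{nivo}, and are linearly independent there because the $df_j$ are; hence they span the tangent space of each torus, as asserted. The pre-isotropic (resp.\ isotropic) character of the tori follows from $Y_j(f_i)=[f_j,f_i]=0$ on $M$, which translates via the restrictions $\alpha(Y_j)=f_j$ and $d\alpha=j^*\omega$ into $\omega(Y_j,Y_i)=0$ on $M$.

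The main obstacle is the identification $Y_k=Y_{f_k}$: the coefficient $(F_k-\alpha(X_{F_k}))/\alpha(X_H)$ in \eqref{horY2} is engineered precisely so that $\alpha(Y_k)$ is adjusted to $f_k$ while tangency to $M$ and the relation $i_{Y_k}d\alpha=-df_k$ are simultaneously preserved; verifying these three compatibilities in one stroke is the technical core. Once this is established, the passage from the Poisson relations \eqref{nehorosev} on $P$ to the Jacobi relations \eqref{involucija} on $(M,\alpha)$, and the remaining application of the criterion from \cite{Jo}, is essentially bookkeeping.
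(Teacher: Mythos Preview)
Your proposal is correct and follows essentially the same route as the paper: both arguments establish $[1,f_i]=0$, then identify the contact Hamiltonian field $Y_{f_k}$ on $(M,\alpha)$ with the explicit combination \eqref{horY2} (the paper derives the formula by comparing $i_{\hat Y_{f_i}}d\alpha$ with $j^*(i_{X_{F_i}}\omega)$, whereas you verify it directly by checking $\alpha(Y_k)=f_k$ and $i_{Y_k}d\alpha=-df_k$), and finally compute $[f_j,f_i]=j^*\{F_i,F_j\}=0$ from \eqref{der} and invoke the criterion from \cite{Jo}. One small slip: the ambient contact manifold $M$ has dimension $2n-1=2(n-1)+1$, so with $2n-r-1=2(n-1)-(r-1)$ independent integrals the invariant pre-isotropic tori have dimension $r$, not $r+1$; the ``$r+1$'' in Subsection~1.2 refers to a different ambient dimension.
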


Note that the restriction of the Hamiltonian flow \eqref{2} to $M$
and  the Reeb flow \eqref{RF} are related by the time
reparametrization $dt=\alpha(X_H)d\tau$.

\medskip

\begin{proof}
As in the proof of the usual noncommutative integrability, from
$$
X_{F_i}(F_j)=\{F_j,F_i\}=0\vert_M, \qquad i=1,\dots,r, \quad
j=1,\dots, 2n-r,
$$
we have that $X_{F_i}$, $i=1,\dots,r$ are tangent to the invariant
sets \eqref{nivo}. Also, if $T$ is regular component of
\eqref{nivo}, from the dimensional reasons, its tangent space is
spanned with $X_{F_i}$, $i=1,\dots,r$. This is an isotropic
manifold since
$$
\omega(X_{F_i},X_{F_j})=\{F_j,F_i\}=0\vert_M, \quad i,j=1,\dots,r.
$$

It is a nontrivial fact that $T$ admits a transitive $\R^r$-action
and therefore it is diffeomorphic to a $r$-dimensional torus.
Namely, conditions \eqref{nehorosev} on $M$  imply
\begin{equation*}
[X_{F_k},X_{F_i}]=\sigma_{ki}X_{F_1}=\sigma_{ki} X_H, \quad
k=1,\dots,r, \quad i=1,\dots,2n-r,
\end{equation*}
for certain functions $\sigma_{ki}$ defined on $M$. Thus, $T$ is a
torus if one can deform tangent vector fields
$X_{F_1},\dots,X_{F_r}$ to commuting independent vector fields on
$T$. This, together with a noncommutative integrability of the
Reeb flow \eqref{RF} follows from the consideration below.

To avoid a confusion between the objects defined on $M$ and $P$,
in what follows by $\tilde X$ we denote the restrictions of the
vector fields $X$ tangent to $M$: $j_*(\tilde X)=X$.

First note, since $Z$ is proportional to $\tilde X_H$, the
restrictions $f_2,\dots,f_{2n-r}$ are integrals of the Reeb flow
\eqref{RF}. That is, $df_i$ are semi-basic forms on $M$, or in
terms of the Jacobi bracket \eqref{der}:
$$
[1,f_i]=0, \qquad i=2,\dots,2n-r.
$$
Thus, we need to prove
$$
[f_k,f_i]=0, \qquad i=2,\dots,2n-r, \quad k=2,\dots,r,
$$
which is equivalent to the commuting of the associated contact
Hamiltonian vector fields: $ [Y_{f_k},Y_{f_i}]=0$.

Recall that a horizontal part $\hat{Y}_{f_i}$ of $Y_{f_i}$
($\alpha(\hat{Y}_{f_i})=0$) is defined by
$\hat{Y}_{f_i}=\alpha^\sharp(\hat{df_i})$, and since $df_i$ is
semi-basic, we have
\begin{equation}\label{4}
i_{\hat{Y}_{f_i}} d\alpha=-\hat{df_i}=-df_i.
\end{equation}
On the other hand, the Hamiltonian vector field $X_{F_i}$ is
tangent to $M$ and satisfies
\begin{equation}\label{5}
i_{X_{F_i}} \omega=-dF_i.
\end{equation}
By taking the pull back of \eqref{5} and combining with \eqref{4}
we get
$$
\hat{Y}_{f_i}-\tilde X_{F_i}\subset \ker d\alpha.
$$
Whence,
\begin{equation*}\label{horY}
\hat{Y}_{f_i}=\tilde X_{F_i}-\alpha(\tilde X_{F_i})Z,
\end{equation*}
and the contact Hamiltonian vector fields of the functions $f_i$
are given by
$$
Y_{f_i}=f_i Z + \tilde X_{F_i}-\alpha(\tilde X_{F_i})Z, \qquad
i=2,\dots,2n-r.
$$

By definition of the Jacobi bracket \eqref{der} we have:
\begin{eqnarray*}
[f_k,f_i] &=& {Y_{f_k}}(f_i)- f_i Z(f_k)\\
&=&Y_{f_k}(f_i)\\
&=&(f_k Z+\hat{Y}_{f_k})(f_i)\\
&=& \tilde X_{F_k} (f_i)\\
&=&j^* (X_{F_k} (F_i))\\
&=&j^* \{F_i,F_k\}=0,
\end{eqnarray*}
for $i=2,\dots,2n-r, \, k=2,\dots,r.$
\end{proof}

In particular, for $r=n$ we have the isoenergetic version of the
Arnold-Liouville theorem:

\begin{cor}[Isoenergetic Arnold-Liouville
theorem]\label{IAL} Let $M=H^{-1}(h)$ be a contact type
hypersurface. Suppose a collection of functions
$F_1=H,\dots,F_{n}$ Poisson commute on $M$
\begin{equation}\label{komutiranje}
\{F_i,F_j\}=0\vert_M
\end{equation}
and that the restrictions $F_2\vert_M,\dots, F_{n}\vert_M$ are
independent. Then the Reeb flow on $M$ is contact completely
integrable with respect to the integrals $F_2\vert_M,\dots,
F_{n}\vert_M$.
\end{cor}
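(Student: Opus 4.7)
The plan is to deduce Corollary~\ref{IAL} directly from Theorem~\ref{druga} by specializing to $r=n$. First I would verify that the hypotheses match perfectly: with $r=n$ the collection $F_1=H,F_2,\dots,F_{2n-r}$ consists of $n$ functions, and the commutation condition \eqref{nehorosev} reduces to $\{F_i,F_j\}=0\vert_M$ for all $1\le i,j\le n$, which is precisely \eqref{komutiranje}. The independence of $F_2\vert_M,\dots,F_n\vert_M$ is assumed in both statements. Hence all hypotheses of Theorem~\ref{druga} are in force.

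Invoking the theorem, I would conclude that the Reeb flow on $M$ is contact noncommutatively integrable with respect to the restrictions $f_i=F_i\circ j$, $i=2,\dots,n$, and that the regular compact connected components of the level sets $\{F_2=c_2,\dots,F_n=c_n\}\cap M$ are $r=n$-dimensional isotropic tori of $(P,\omega)$ (equivalently, pre-isotropic tori of $(M,\alpha)$), spanned by $Z$ together with the contact-commuting vector fields $Y_k$ of \eqref{horY2}, $k=2,\dots,n$, carrying quasi-periodic motion in the action--angle coordinates of \eqref{action-angle}--\eqref{namotavanje}.

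To finish, I would identify this with the \emph{commutative} contact integrability asserted by the corollary. Since $\dim M = 2n-1 = 2(n-1)+1$, the maximal pre-isotropic (pre-Legendrian) dimension in $(M,\alpha)$ equals $n$, exactly the dimension of the invariant tori just produced. Thus these tori are pre-Legendrian, and we are placed precisely in the Banyaga--Molino commutative setting recalled in Subsection~1.2; equivalently, the $n-1$ restricted integrals $f_2,\dots,f_n$ form an involutive family of the expected size on the $(2(n-1)+1)$-dimensional contact manifold. I do not foresee a genuine obstacle: the substantive work---the identification of the horizontal parts $\hat Y_{f_k}=\tilde X_{F_k}-\alpha(\tilde X_{F_k})Z$ and the calculation $[f_k,f_i]=j^*\{F_i,F_k\}=0$ via the Jacobi bracket \eqref{der}---has already been carried out in the proof of Theorem~\ref{druga}, and the corollary requires nothing beyond reading off the correct value $r=n$.
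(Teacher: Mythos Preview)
Your proposal is correct and follows exactly the paper's approach: the corollary is stated immediately after Theorem~\ref{druga} with the phrase ``In particular, for $r=n$ we have the isoenergetic version of the Arnold--Liouville theorem,'' and no further proof is given. Your verification that the hypotheses and conclusions specialize correctly (with the $n$-dimensional tori being Lagrangian in $(P,\omega)$ and pre-Legendrian in $(M,\alpha)$) spells out precisely what the paper leaves implicit.
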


Thus, the system doesn't need to be integrable on the whole phase
space, while the isoenergetic hypersurface $M$ is foliated on
Lagrangian tori, or pre-Legendrian in the contact sense. In a
neighborhood of a torus, there exist contact action-angle
coordinates $(\theta_0,\dots,\theta_{n-1},y_1,\dots,y_{n-1})$,
such that $y_i$ depends on the integrals
$F_2\vert_M,\dots,F_n\vert_M$, the contact form has the canonical
form $
y_0(y_1,\dots,y_{n-1})d\theta_0+y_1d\theta_1+\dots+y_{n-1}d\theta_{n-1}$ in which the Reeb vector field is linearized
$$
Z=\sum_{i=0}^{n-1} \omega_i
\frac{\partial}{\partial\theta_i},\qquad
\omega_0=\left(-y_0+\sum_{i=1}^{n-1}y_i\frac{\partial
y_0}{\partial y_i}\right)^{-1}, \qquad
\omega_i=-\frac{1}{\omega_0}\frac{\partial y_0}{\partial y_i}.
$$

Although the problem of existence of polynomial in momenta first
integrals for natural mechanical systems on a fixed isoenergetic
hypersurface is well known and goes back to Birkhoff \cite{Bir0,
Bir} (the examples are given by Yehia, see \cite{Ye} and
references therein), the formulation of the isoenergetic
integrability, to the authors knowledge, has not been given yet.
Furthermore, in the case of the natural mechanical systems, a
compact regular component $M$ of the isoenergetic hypersurface
$H^{-1}(h)$ is always of contact type. This is obvious, when
$h>\max_Q V$. In general, if $h < \max_Q V$ we can perturb the
canonical 1-form $pdq$ by a closed 1-form form $\beta$, such that
$M$ is of contact type with respect to $pdq+\beta$ (e.g., see
\cite{HZ}).

Another approach to the integrability on invariant manifolds is
given by Nekhoroshev \cite{N2, N3}.

\subsection{Partial integrability}
Next, we can interchange the role of the Hamiltonian function and
one of the integrals in Corollary \ref{IAL}.

\begin{cor}\label{partial}
Suppose that a Hamiltonian system \eqref{2} has $n-1$ commuting
integrals $F_1=H,F_2,\dots,F_{n-1}$ and an invariant relation
$$
\Sigma: \qquad F_0=0,
$$
that is, the trajectories with initial conditions on $\Sigma$ stay
on $\Sigma$ for all time $t$. If $\Sigma$ is of the
contact type manifold and if it is invariant for all Hamiltonian
flows $X_{F_i}$, then the compact regular components of the
invariant varieties
$$
F_0=0, \quad H=F_1=c_1, \quad F_2=c_2, \quad \dots, \quad
F_{n-1}=c_{n-1}
$$
are Lagrangian tori.
\end{cor}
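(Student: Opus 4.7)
The plan is to reduce Corollary \ref{partial} to the isoenergetic Arnold--Liouville theorem (Corollary \ref{IAL}) by swapping the roles of the Hamiltonian $H$ and the auxiliary function $F_0$. That is, I would treat the invariant relation $\Sigma=\{F_0=0\}$ itself as the contact-type hypersurface playing the role of $M$ in Corollary \ref{IAL}, and regard $F_0$ as the new ``Hamiltonian'' while $F_1=H,F_2,\dots,F_{n-1}$ become the $n-1$ commuting integrals. Since $\Sigma$ is $(2n-1)$-dimensional and co-oriented contact by hypothesis, Corollary \ref{IAL} will produce the Reeb dynamics on $\Sigma$ with invariant pre-Legendrian tori, which as submanifolds of $(P,\omega)$ are exactly Lagrangian.

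To carry this out, two things need to be verified. First, I would check that the collection $F_0,F_1,\dots,F_{n-1}$ satisfies the Poisson commutation relations \eqref{komutiranje} on $\Sigma$. For pairs $(F_i,F_j)$ with $i,j\geq 1$ this is automatic from the assumed global commutation. For the pairs $(F_0,F_i)$ with $i\geq 1$, the hypothesis that $\Sigma$ is invariant under each Hamiltonian flow $X_{F_i}$ means precisely that $X_{F_i}$ is tangent to $\Sigma=F_0^{-1}(0)$, hence
\[
\{F_0,F_i\}\big|_\Sigma = X_{F_i}(F_0)\big|_\Sigma = 0,
\]
which is exactly what Corollary \ref{IAL} asks of the commuting relations on the isoenergetic surface. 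Second, I need the restrictions $F_1|_\Sigma,\dots,F_{n-1}|_\Sigma$ to be independent on the regular component under consideration; but regularity of the level set means the $n$ differentials $dF_0,dH,dF_2,\dots,dF_{n-1}$ are linearly independent, and since $dF_0$ is transverse to $\Sigma$, the pullbacks $dF_i|_\Sigma$ for $i=1,\dots,n-1$ remain independent.

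With these two observations in place, Corollary \ref{IAL} applies verbatim: the compact regular connected components of \eqref{nivo} (with $F_0$ in the role of $H$) are pre-Legendrian tori in $(\Sigma,\alpha)$ on which the Reeb vector field $Z=X_{F_0}/\alpha(X_{F_0})|_\Sigma$ is linearised. Pre-Legendrian submanifolds of a contact-type hypersurface are isotropic of maximal dimension $n$ in $(P,\omega)$, hence Lagrangian, as claimed. Finally, since $X_H=X_{F_1}$ is tangent to these tori (being a Hamiltonian flow of one of the commuting integrals), its dynamics on each torus is a straight-line flow in the contact action-angle coordinates of Corollary \ref{IAL}; however the frequencies are now indexed by the $F_0$-Reeb normalisation rather than by $H$, which is exactly the mechanism behind the non-quasi-periodic behaviour in Hess--Appel'rot type systems alluded to in the introduction.

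No step is genuinely hard: the only point that deserves care is recognising that the invariance hypothesis on $\Sigma$ is exactly the symmetric partner of the commutation conditions needed to put $F_0$ on equal footing with $H,F_2,\dots,F_{n-1}$ on $\Sigma$. Once this symmetry is noted, the result is an immediate corollary of Corollary \ref{IAL}.
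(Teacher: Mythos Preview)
Your reduction to Corollary \ref{IAL} by swapping the roles of $H$ and $F_0$ is exactly the argument the paper intends: the sentence introducing Corollary \ref{partial} says precisely ``we can interchange the role of the Hamiltonian function and one of the integrals in Corollary \ref{IAL}.'' Your verification of the two hypotheses (commutation on $\Sigma$ via the invariance assumption, and independence of the restricted differentials on the regular component) is correct and complete for the stated conclusion that the level sets are Lagrangian tori.

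One caution about your closing paragraph: the assertion that $X_H$ has a ``straight-line flow in the contact action-angle coordinates'' is not quite right, and in fact contradicts the paper's own comment that ``the dynamics of \eqref{2} does not need to be solvable.'' From \eqref{horY2} (with $F_0$ playing the role of $H$) one has
\[
X_{F_1}\big|_\Sigma = Y_{f_1} - \frac{F_1-\alpha(X_{F_1})}{\alpha(X_{F_0})}\,X_{F_0},
\]
and while $Y_{f_1}$ and $Z$ are indeed linear in the contact angle coordinates, the coefficient $\alpha(X_{F_1})$ need not be constant along an invariant torus. It is precisely this non-constancy that prevents the restriction of $X_H$ from being quasi-periodic and produces the Hess--Appel'rot phenomenon (cf.\ the Riccati equation mentioned in the example in \S2.5). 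So the ``mechanism'' is not a relabelling of frequencies but a genuine time-dependent reparametrisation along each torus. This does not affect the proof of the corollary itself, which only claims the Lagrangian-torus structure.
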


The situation is closely related to the notion of the systems of
Hess–-Appel'rot type introduced by Dragovi\'c and Gaji\' c
\cite{DG} as well as the notion of partial integrability given in
\cite{Jo1}. Although $\Sigma$ is foliated on invariant Lagrangian
tori, the dynamics of \eqref{2} does not need to be solvable.

\subsection{Remark}
If we are interested in the dynamics in the invariant neighborhood
$U\subset M$ of the regular compact invariant level set $T$ (with
the property $dF_1\wedge\dots\wedge dF_{2n-r}\ne 0\vert_U$), then
instead of the condition that $M$ is of contact type in Theorem
\ref{druga}, we can assume a slightly weaker condition that $U$ is
of contact type. Then we have contact integrability of the Reeb
flow restricted to $U$. Similarly, the invariant relation $\Sigma$
in Corollary \ref{partial} does not need to be of contact type.

\subsection{Example}
A classical example of a partially integrable system is the
Hess–-Appel'rot case of the heavy rigid body motion around a fixed
point. The phase space is the cotangent bundle of $SO(3)$. In
Euler's angles $(\varphi,\theta,\psi)$, the Hamiltonian of the
system can be written in the form
$$
H=\frac12(a M_1^2+ a M_2^2+ b M_3^2+2c M_1 M_3)+k\cos\theta,
$$
where the components of the angular momentum are
\begin{eqnarray*}
&&
M_1=\frac{\sin\varphi}{\sin\theta}(p_\psi-p_\varphi\cos\theta)+p_\theta\cos\varphi,\\
&&
M_2=\frac{\cos\varphi}{\sin\theta}(p_\psi-p_\varphi\cos\theta)-p_\theta\sin\varphi,
\qquad M_3=p_\varphi.
\end{eqnarray*}

The system has two integrals, the Hamiltonian function $H$ and the
Noether integral $M_z=p_\psi$ corresponding to the symmetry of the
system with respect to the rotations around the vertical axes
$\vec e_z$. Also, the system has the invariant relation
$$
\Sigma: \qquad M_3=p_\varphi=0.
$$

Since the Hamiltonian flow of the function $M_3$ is given by the
vector field $ X_{M_3}={\partial}/{\partial \varphi}$ (rotations
of the body around the axes $\vec e_3$ fixed in the body), we see
that $\Sigma$ is not of the contact type with respect to the
canonical 1-form $p_\varphi d\varphi+p_\theta d\theta+p_\psi
d\psi$, but it is of the contact type with respect to the
perturbation of the canonical 1-form by a closed 1-form
$d\varphi$:
$$
\alpha=p_\varphi d\varphi+p_\theta d\theta+p_\psi d\psi+d\varphi,
\qquad \alpha(X_{M_3})\equiv 1.
$$

Further, since
$$
\{M_3,M_z\}=0, \qquad \{p_\varphi,p_\psi\}=0,
$$
from Corollary \ref{partial} we obtain that the regular connected
invariant level sets
$$
H=h, \qquad M_z=c, \qquad M_3=0,
$$
are Lagrangian tori. This is well known and can be seen directly
from the fact that, after confining to $\Sigma$, the Hamiltonian
of the Hess-Appel'rot case coincides with the Hamiltonian of the
integrable Lagrange case of the heavy rigid body motion. However,
the dynamics on $\Sigma$ of the Hess-Appel'rot system is quite
different from the one of the Lagrange top. For a complete
integration one should solve an additional Riccati equation (e.g.,
see \cite{G} for both: the classical and the algebro-geometric
integration of the system).

\subsection{Example} As an illustration of Proposition \ref{prva},
we give the example of an integrable contact flow, which is not a
a geodesic flow of the Maupertuis--Jacobi metric. Consider the
simplest integrable system on the standard linear symplectic space
$(\R^{2n+2}(q,p), dp\wedge dq)$, the system of $n+1$ harmonic
oscillators with the Hamiltonian
\begin{equation}\label{H0}
H_0=\sum_{i=0}^{n}\frac{1}{2a_i}(q_i^2+p_i^2),
\end{equation}
where $a_i$ are positive numbers. The commuting integrals are
$F_i=q_i^2+p_i^2$, $i=0,\dots,n$. If instead of the canonical
1-form $pdq$ we take
\begin{equation}\label{pert}
\alpha_0=\sum_{i=0}^{n} p_idq_i-\frac12d(\sum_{i=0}^{n} p_i
q_i)=\frac12\sum_i p_i dq_i-q_i dp_i,
\end{equation}
then $d\alpha_0=d(pdq)=dp\wedge dq$ and the only zero of
$\alpha_0$ is at the origin $0$. The corresponding Liouville
vector field is
$$
E=\frac12\sum_i q_i\frac{\partial}{\partial q_i}+p_i
\frac{\partial}{\partial p_i}.
$$

Let $E_h$ be an ellipsoid $H_0=h$, $h>0$. We have
$E(H_0)\vert_{E_h}=h\ne 0$. Therefore, $E_h$ is of contact type
with respect to $\alpha_0$. According to \eqref{skaliranje}, the
Reeb vector field on $(E_h,\alpha_0)$ is given by
$Z=\frac{1}{h}X_{H_0}$. The contact ellipsoid $(E_h,\alpha_0)$ is
contactomorphic to the $K$-contact structure on a sphere
$S^{2n+1}$ \cite{Y} (see Section 4). In particular, for
$a_0=a_1=\dots=a_{n}=1$ we get the standard contact structure on
the sphere $S^{2n+1}=H_0^{-1}(h)$, where the characteristic line
bundle defines the Hopf fibration.

\begin{lem}\label{U(n)dejstvo}
The standard linear action of $U(n+1)$ on $\R^{2n+2}\cong \mathbb
C^{n+1}$ ($p+ iq=z$) is Hamiltonian with the momentum mapping
$$
\Phi: \R^{2n+2}\to \mathfrak u(n+1)\cong \mathfrak u(n+1)^*, \quad
\Phi(q,p)=p\wedge q+ i(q\otimes q+p\oplus p).
$$
\end{lem}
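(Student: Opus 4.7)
The plan is to verify directly the two defining properties of a momentum mapping for the $U(n+1)$-action on $(\R^{2n+2}, dp\wedge dq)$: for every $A\in\mathfrak{u}(n+1)$, the function $\Phi_A:=\langle\Phi,A\rangle$, computed with respect to the $\Ad$-invariant pairing $\langle X,Y\rangle=\tfrac12\tr(XY)$ on $\mathfrak{u}(n+1)$, should be Hamiltonian for the fundamental vector field, and $\Phi$ should be equivariant with respect to the coadjoint action.

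First I would decompose any $A\in\mathfrak{u}(n+1)$ as $A=S+iT$ with $S^T=-S$ and $T^T=T$ real. From $z=p+iq$ and $Az=(Sp-Tq)+i(Tp+Sq)$, the fundamental vector field in $(q,p)$-coordinates is read off immediately, and contracting it with $\omega=dp\wedge dq$ gives a 1-form whose primitive, using only $S^T=-S$ and $T^T=T$, is
$$f_A(q,p) = q^T S p - \tfrac12\bigl(q^T T q + p^T T p\bigr),$$
up to the paper's sign convention $i_{X_H}\omega=-dH$.

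The crux is then to identify $f_A = \tfrac12\tr(\Phi(q,p)\,A)$. Interpreting $p\wedge q$ as the real antisymmetric matrix $pq^T-qp^T$ and $q\otimes q+p\otimes p$ as $qq^T+pp^T$, I would expand $\Phi A$ and note that two of the four cross terms vanish under the trace: $\tr((pq^T-qp^T)T)=0$ by symmetry of $T$, and $\tr((qq^T+pp^T)S)=0$ by antisymmetry of $S$. The two surviving terms reproduce $2f_A$. In hindsight this is transparent from the compact formula $\Phi(q,p)=izz^*$, which makes manifest both that $\Phi$ is skew-Hermitian, i.e. takes values in $\mathfrak{u}(n+1)$, and that it is a quadratic polynomial in $(q,p)$ giving a single rank-one matrix.

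Finally, equivariance is immediate from the compact formula: for $U\in U(n+1)$,
$$\Phi(Uz)=i(Uz)(Uz)^*=U(izz^*)U^{-1}=\Ad_U\Phi(z),$$
which combined with $\Ad$-invariance of the chosen trace pairing yields coadjoint equivariance. I do not foresee any substantive obstacle; the only delicate point is coordinating three interacting sign conventions (Hamilton's equations, the pairing, and the orientation of the fundamental vector field), and this merely fixes the overall sign in the stated formula.
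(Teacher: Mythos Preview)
The paper states this lemma without proof, treating it as a standard fact about the moment map for the unitary action on $\C^{n+1}$; there is nothing to compare your argument against. Your direct verification is correct and is exactly the kind of computation one would supply if a proof were required. The observation $\Phi(q,p)=izz^{*}$ is the cleanest way to see both that $\Phi$ lands in $\mathfrak u(n+1)$ and that equivariance holds, and your trace computation correctly isolates the two surviving terms.

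One small point: the paper's pairing is $\langle\xi,\eta\rangle=-\tfrac12\tr(\xi\eta)$, not $+\tfrac12\tr(\xi\eta)$ as you wrote; with the minus sign the form is positive definite on skew-Hermitian matrices and the signs in your $f_A$ then match the Hamiltonian convention $i_{X_H}\omega=-dH$ used in the paper. You already flagged that the only subtlety is aligning the three sign conventions, so this is not a gap, just a reminder to use the paper's pairing when you write it up. (Note also the evident typo in the statement: $p\oplus p$ should read $p\otimes p$.)
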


Here $\mathfrak u(n+1)$ is identified with $\mathfrak u(n+1)^*$ by
the use of the product proportional to the Killing form: $\langle
\xi,\eta\rangle=-\frac12\tr(\xi\eta)$.

The Hamiltonian \eqref{H0} for $a_0=a_1=\dots=a_{n}=1$
 is a collective function -  the pull-back of a
linear Casimir function
$$
K_0(\xi)=-\frac12\tr(\xi \zeta), \qquad \zeta=\diag(i,\dots,i)
$$
via the momentum mapping $\Phi$. Note that $\Phi(\R^{2n+2})$ is
the union of singular adjoint orbits. For $(q,p)\ne 0$, the orbit
through $\Phi(q,p)$ is diffeomorphic to a complex projective space
$\mathbb{CP}^n\cong U(n+1)/U(n)\times U(1)$. On the other hand,
the orbits of $U(n+1)$-action on $\R^{2n+2}$ for $(q,p)\ne 0$ are
the spheres $H_0^{-1}(h)$.

Let us take an arbitrary integrable system on $\mathfrak u(n+1)$.
For example, consider the Gelfand--Cetlin system that is defined
by the filtration of Lie algebras  $ \mathfrak u(1)\subset
\mathfrak u(2)\subset \dots \subset \mathfrak u(n+1)$ (e.g., see
\cite{GS}).

Let $\xi_{\mathfrak u(k)}$ be the projection of $\xi$ to
$\mathfrak u(k)$ with respect to the scalar product $\langle
\cdot,\cdot\rangle$. The Euler equation $ \dot \xi=[\xi,\nabla
K(\xi)] $ with the Hamiltonians
$$ K(\xi)=-\frac12\sum_{k=0}^n
{\lambda_k}\tr(\xi_{\mathfrak u(k+1)} \xi_{\mathfrak u(k+1)}), $$
where $\lambda_k$ are real parameters, are completely integrable.

Note that the $U(n+1)$-action is multiplicity free, so the system
with collective Hamiltonian $H=K\circ\Phi$ is also integrable and
we need only Noether integrals for complete integrability
\cite{GS}. Indeed, we have $H=\sum_{k=0}^n \lambda_k H_k(q,p)$,
where
$$
H_k=\frac12\sum_{i=0}^k (q_i^2+p_i^2)^2+\sum_{0\le i<j\le r}
(q_iq_j+p_ip_j)^2+(q_jp_i-p_jq_i)^2, \quad k=0,\dots,n
$$
are commuting integrals of the system.

Let $\Sigma: \, H=h\ne0$. It is an algebraic surface of degree 4.
Since $E(H)=2h\vert_{\Sigma}$, $\Sigma$ is of contact type
 and the Reeb flow is
$Z=(2h)^{-1}X_H\vert_{\Sigma}$.

We can summarize the considerations above in the following
statement.

\begin{prop}
The Reeb flow on $\Sigma$ is completely integrable.
\end{prop}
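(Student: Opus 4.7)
The plan is to apply Proposition \ref{prva} to the Hamiltonian flow $\dot x=X_H$ on $(\R^{2n+2},dp\wedge dq)$, restricted to the hypersurface $\Sigma=H^{-1}(h)$. Since the Gelfand--Cetlin Hamiltonians $K_0,\dots,K_n$ commute in the Lie--Poisson bracket on $\mathfrak u(n+1)$, their pull-backs $H_k=K_k\circ\Phi$ form $n+1$ Poisson-commuting integrals of $H$ on $\R^{2n+2}$. The explicit formula shows that $H_k$ depends only on $q_0,p_0,\dots,q_k,p_k$, so $dH_0,\dots,dH_n$ are generically linearly independent on $\R^{2n+2}$. This gives commutative Liouville integrability, which is the $r=n+1$ case of the noncommutative condition \eqref{nehorosev} with $F_1=H$ and $F_2,\dots,F_{n+1}$ selected from among the $H_k$.

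To match the hypotheses of Proposition \ref{prva}, one must select $n$ of these functions whose restrictions to $\Sigma$ stay independent. Fix an index $k_0$ with $\lambda_{k_0}\neq 0$ and set $\{F_2,\dots,F_{n+1}\}=\{H_k:k\neq k_0\}$. Any linear relation $\sum_{k\neq k_0}a_k\,dH_k|_{T_x\Sigma}=0$ at $x\in\Sigma$ is equivalent to $\sum_{k\neq k_0}a_k\,dH_k=c\,dH_x$ for some scalar $c$; comparing the coefficient of $dH_{k_0}$ forces $c\lambda_{k_0}=0$, so $c=0$ and every $a_k=0$. Hence independence holds on an open dense subset of $\Sigma$.

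The contact-type condition for $(\Sigma,\alpha_0)$ has already been verified in the preamble to the statement: the standard Liouville vector field $E$ satisfies $E(H)|_\Sigma=2h\neq 0$, so $E$ is transverse to $\Sigma$ (criterion from \cite{LM}). Proposition \ref{prva} now delivers that the Reeb flow $Z=(2h)^{-1}X_H|_\Sigma$ on $(\Sigma,\alpha_0)$ is contact completely integrable with respect to the restrictions $H_k|_\Sigma$, $k\neq k_0$. Because the ambient integrability is commutative, the contact integrability is also commutative in the sense of \eqref{action-angle} with contact rank $r=n$, and the generic compact invariant level sets are pre-Legendrian tori of dimension $n+1$.

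There is no substantive obstacle: the argument is a direct invocation of Proposition \ref{prva} together with the elementary linear-algebra check of independence on $\Sigma$, which reduces to the single relation $dH=\sum_k\lambda_k\,dH_k$.
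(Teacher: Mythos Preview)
Your proposal is correct and follows essentially the same approach as the paper: the paper has no separate proof for this proposition but simply summarizes the preceding discussion, which amounts exactly to your argument --- the commuting collective integrals $H_0,\dots,H_n$ give Liouville integrability of $X_H$ on $\R^{2n+2}$, the contact-type condition $E(H)\vert_\Sigma=2h\neq 0$ is already checked, and Proposition~\ref{prva} yields the result. Your explicit linear-algebra verification that the restrictions $H_k\vert_\Sigma$, $k\neq k_0$, remain independent is a detail the paper leaves implicit.
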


Similarly, let a compact connected Lie group $G$ acts in a
Hamiltonian way on a symplectic manifold $(P,\omega)$ with the
equivariant momentum mapping $\Phi: P\to \mathfrak g^*$ (doesn't need
to be multiplicity free action). Let $K: \mathfrak
g^*\to\mathbb{R}$ be a Hamiltonian function such that the Euler
equations
\begin{equation} \label{Euler}
\dot \mu=-\ad^*_{dK(\mu)}\mu
\end{equation}
are completely integrable on general co-adjoint orbits ${\mathcal
O}(\mu)\subset\Phi(M)$ with a set of Lie--Poisson commuting
integrals $f_i$, $i=1,\dots,N$, $N=\dim\mathcal O(\mu)$. Then the
Hamiltonian equations on $P$ with a collective Hamiltonian
function $H=K\circ\Phi$ are completely integrable (in the
non-commutative sense). The complete set of first integrals is
$$
\{f_i\circ\Phi\mid i=1,\dots,N\}+C^\infty_G(P),
$$
where $C^\infty_G(P)$ is the algebra of $G$-invariant functions
\cite{BJ}. Thus, if a isoenergetic hypersurface $M=H^{-1}(h)$ is
of a contact type, the associated Reeb vector field will be
completely integrable.

Recently, as an application of Proposition \ref{prva}, a discrete
Hamiltonian system, namely Heisenberg model on a product of
light--like cones in a pseudo--Euclidean space, which induces an
integrable contact transformation on certain contact hypersurfaces
is given in \cite{Jo3}.

\section{Contact systems with constraints}

A {\it contact submanifold} of the contact manifold $(M,{\mathcal
H}_M)$ is a triple $(N,{\mathcal H}_N, j)$, where $(N,{\mathcal
H}_N)$ is a contact manifold and $j: N\rightarrow M$ is an
embedding such that $j_{\ast}^{-1}({\mathcal H}_M)={\mathcal
H}_N$.

Let $(M,\alpha)$ be a co-oriented contact manifold and $j:
N\rightarrow M$ an embedding. If we define
\[
{\mathcal H}_N=\{X\in TN\, \vert\, j_{\ast}(X)\in {\mathcal
H}_M\}=j_{\ast}^{-1}({\mathcal H}_M),
\]
then  ${\mathcal H}_N=\ker(
j^{\ast}\alpha)$. The distribution ${\mathcal H}_N$ is of codimension one, if $N$ is transverse to ${\mathcal H}_M$.
In order to induce a contact structure on $N$ it
is also necessary that $d j^{\ast}\alpha$ is non-degenerate on
$({\mathcal H}_N)_x$, for all $x\in N$. To summarize,
$(N,j^{\ast}\alpha)$ is a {\it contact co-oriented submanifold} of
$(M,\alpha)$, if $N$ is transverse to ${\mathcal H}_M$ and if
$dj^{\ast}\alpha$ is non-degenerate on $\ker(j^{\ast}\alpha)$.

We derive a contact version of Dirac's construction which deals
with constrained Hamiltonian vector fields on symplectic manifolds
(e.g., see \cite{MZ}).

\begin{thm}\label{opsta}
Let $(M,\alpha)$ be a $(2n+1)$-dimensional co-oriented contact
manifold, $G_1,\dotso,G_{2k}$ smooth functions on $M$,
\begin{equation}\label{NG}
N=\{x\in M\,\vert\, G_1(x)=\dotso= G_{2k}(x)=0\},
\end{equation}
and $j: N\rightarrow M$ be the corresponding embedding.

\

\noindent {\rm (a)} If  $[1,G_j]=0\vert_N$, $j=1,\dotso,2k$ and
\begin{equation}\label{G}
\det([G_j,G_i])\neq0\vert_N
\end{equation}
then $(N,j^{\ast}\alpha)$ is a contact submanifold of $(M,\alpha)$
with the Reeb vector field that is the restriction of the the Reeb
vector field $Z$ of $(M,\alpha)$.

\

 \noindent {\rm (b)} Let
$f$ be a smooth function on $M$ and
$$
W_f=Y_f-\sum_{i=1}^{2k}\lambda_i Y_{G_i}.
$$
Then the system
\begin{equation}\label{nondeg}
dG_j(W_f)=Y_f(G_j)-\sum_i\lambda_i Y_{G_i}(G_j)=0\qquad
j=1,\dotso,2k
\end{equation}
has a unique solution
$\lambda_1=\lambda_1(f),\dotso,\lambda_{2k}=\lambda_{2k}(f)$ on
$N$. For the given multipliers,
$$
Y^*_f=W_f
$$
is the contact Hamiltonian  vector field of the function $f$
restricted to $N$. If $g$ is any smooth function on $M$, the
Jacobi bracket between the restrictions of $f$ and $g$ to $N$ is
given by
\begin{equation}\label{dirac}
[f\vert_N,g\vert_N]_N=[f,g]+\sum_{i,j} [G_i,g]A_{ij}[G_j,f],
\end{equation}
where $A_{ij}$ is the inverse of the matrix $([G_i,G_j])$. In
particular, if either $Y_f$ or $Y_g$ is tangent to $N$, then
$[f\vert_N,g\vert_N]_N=[f,g]$.
\end{thm}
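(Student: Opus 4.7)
The plan is to handle parts (a) and (b) in parallel, exploiting two algebraic identities that hold on $N$. First, since $G_j=0$ on $N$, the Jacobi bracket \eqref{der} collapses to $Y_{G_i}(G_j)=[G_i,G_j]$ on $N$. Second, $\alpha(Y_{G_i})=G_i=0$ on $N$, so the contact Hamiltonian fields $Y_{G_i}|_N$ are horizontal. The hypothesis $[1,G_j]=Z(G_j)=0$ on $N$ says that the Reeb field $Z$ of $(M,\alpha)$ is tangent to $N$; it will then serve as the Reeb field of the induced structure once that structure is shown to be contact.

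For (a), transversality of $N$ to $\mathcal H_M$ is automatic from $\alpha(Z)=1$ together with $Z\in TN$, so $\mathcal H_N=\ker(j^*\alpha)$ has codimension $1$ in $TN$ and codimension $2k$ in $\mathcal H_M|_N$. The substantive point is non-degeneracy of $d(j^*\alpha)$ on $\mathcal H_N$. I would show first that the fields $Y_{G_1}|_N,\dots,Y_{G_{2k}}|_N$ form a complement to $\mathcal H_N$ in $\mathcal H_M|_N$: if $\sum_i c_i Y_{G_i}\in TN$, then $0=dG_j\bigl(\sum_i c_i Y_{G_i}\bigr)=\sum_i c_i[G_i,G_j]$ on $N$, which by \eqref{G} forces $c_i=0$; dimension counting yields $\mathcal H_M|_N=\mathcal H_N\oplus\Span(Y_{G_i}|_N)$. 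If $X\in\mathcal H_N$ is $d\alpha$-orthogonal to $\mathcal H_N$, it is automatically $d\alpha$-orthogonal to each $Y_{G_i}$, since $d\alpha(Y_{G_i},X)=-dG_i(X)+Z(G_i)\alpha(X)=0$ on $N$ (the first term vanishes because $X\in TN$, the second because $Z(G_i)=0$ on $N$). Hence $X$ pairs trivially with all of $\mathcal H_M|_N$, and non-degeneracy of $d\alpha$ on $\mathcal H_M$ forces $X=0$. Finally, $Z|_N$ is the Reeb field of $(N,j^*\alpha)$ because $j^*\alpha(Z|_N)=1$ and $i_{Z|_N}d(j^*\alpha)=j^*(i_Z d\alpha)=0$.

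For (b), evaluating \eqref{nondeg} on $N$ reduces the constraint system to $\sum_i\lambda_i[G_i,G_j]=[f,G_j]$, uniquely solvable by \eqref{G}. With these multipliers $W_f$ is tangent to $N$ by construction, and $\alpha(W_f)=f-\sum_i\lambda_i G_i=f$ on $N$, hence $j^*\alpha(W_f|_N)=f|_N$. Pulling back $i_{W_f}d\alpha=-df+Z(f)\alpha+\sum_i\lambda_i(dG_i-Z(G_i)\alpha)$ through $j$ kills the $dG_i$ and $Z(G_i)$ terms, producing $i_{W_f|_N}d(j^*\alpha)=-d(f|_N)+Z|_N(f|_N)j^*\alpha$, which is exactly the characterization \eqref{iso} of the contact Hamiltonian field of $f|_N$ on $(N,j^*\alpha)$. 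The Dirac identity then comes out of unwinding $[f|_N,g|_N]_N=W_f(g)-g Z(f)$ on $N$: writing $Y_f(g)=[f,g]+gZ(f)$ and $Y_{G_i}(g)=[G_i,g]$ on $N$ gives $[f|_N,g|_N]_N=[f,g]-\sum_i\lambda_i[G_i,g]$ on $N$, and substituting the solution of the linear system for $\lambda_i$ produces \eqref{dirac}. The special cases are immediate: if $Y_g$ is tangent to $N$ then $dG_j(Y_g)=[g,G_j]=0$ on $N$ and the correction vanishes; if $Y_f$ is tangent to $N$ then $[f,G_j]=0$ on $N$, forcing all $\lambda_i=0$.

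The main obstacle is the non-degeneracy step in (a): one must simultaneously establish the direct-sum decomposition $\mathcal H_M|_N=\mathcal H_N\oplus\Span(Y_{G_i}|_N)$ and the extension of $d\alpha$-orthogonality from $\mathcal H_N$ to all of $\mathcal H_M|_N$. Everything else reduces to careful bookkeeping with the identities $i_{Y_f}d\alpha=-df+Z(f)\alpha$ and $Y_f(g)=[f,g]+gZ(f)$, together with the vanishing of $G_i$ and $Z(G_i)$ on $N$.
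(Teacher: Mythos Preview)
Your proposal is correct and follows essentially the same route as the paper. The only cosmetic difference is in the non-degeneracy step of (a): the paper identifies $\orth_{\mathcal H_M}\mathcal H_N=\Span(\hat Y_{G_i})$ and shows this span is symplectic via $d\alpha(\hat Y_{G_i},\hat Y_{G_j})=[G_i,G_j]$, whereas you argue contrapositively that any radical vector of $d\alpha|_{\mathcal H_N}$ is already orthogonal to the $Y_{G_i}$ and hence lies in the kernel of $d\alpha|_{\mathcal H_M}$; these are dual formulations of the same linear-algebra fact, and your computation $d\alpha(Y_{G_i},X)=-dG_i(X)+Z(G_i)\alpha(X)=0$ for $X\in\mathcal H_N$ is exactly what the paper uses.
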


\begin{proof}
(a) \enspace From the conditions $[1,G_j]=0\vert_N$,
$j=1,\dots,2k$ and \eqref{G}, it easily follows that
$dG_1,\dotso,dG_{2k}$ are linearly independent semi-basic forms on
$N$, and that the Reeb vector field $Z$ is tangent to $N$. Hence
$N$ is a submanifold transverse to ${\mathcal H}_M$.

Let
$$
\mathcal H_N=\{\xi\in T_xN\,\vert\, \alpha(\xi)=0\}.
$$

We need to prove that $d\alpha$ is non-degenerate on $\mathcal
H_N$, i.e. that $\mathcal H_N$ is a symplectic subbundle of
${\mathcal H}_M$. Owing to $\dim \mathcal H_M=2n$, $\dim\mathcal
H_N=2n-2k$, we obtain that the dimension of the symplectic
orthogonal to $\mathcal H_N$ within $\mathcal H_M$ equals $2k$.

Since $dG_1,\dots,dG_{2k}$ are independent semi-basic forms on
$N$, we have that $Z, Y_{G_1},\dotso,Y_{G_{2k}}$, i.e.,  $\hat
Y_{G_1},\dotso,\hat Y_{G_{2k}}$ are linearly independent vector
fields on $N$. Whence, from
\[
 d\alpha(\hat Y_{G_j},
X)=-dG_j(X)=0\vert_N, \qquad X\in\mathcal H_N,
\]
we see that $\orth_{\mathcal H_M} \mathcal H_N$ is spanned by
$\hat Y_{G_1},\dotso,\hat Y_{G_{2k}}$. The relations
$$
d\alpha(\hat Y_{G_j},\hat Y_{G_i})=d\alpha(Y_{G_j}, Y_{G_i})=[G_j,G_i]
$$
and \eqref{G} yield that $\orth_{\mathcal H_M} \mathcal H_N$ is
symplectic, hence $\mathcal H_N$ is symplectic as well.

Since  $Z$ is tangent to $N$, it is obvious that $Z\vert_N$ is the
Reeb vector field  of $(N,j^{\ast}\alpha)$. Also,  note that we
have the following decompositions of $T_x M$ at $x\in N$:
\begin{equation}\label{GG}
T_x M=\mathcal Z_x \oplus {\mathcal H_M}_x=\mathcal Z_x\oplus
{\mathcal H_N}_x\oplus {\orth_{\mathcal H_M}\mathcal H_N}_x=T_x
N\oplus {\orth_{\mathcal H_M}\mathcal H_N}_x.
\end{equation}

(b)\enspace   According to \eqref{der}, we can write the equations
for multipliers \eqref{nondeg} in terms of the Jacobi brackets
\begin{equation}\label{dirac2}
\sum_i \lambda_i \,[G_i,G_j]=[f,G_j], \qquad j=1,\dots,2k,
\end{equation}
where we used $G_j\vert_N \equiv 0$, $j=1,\dots,2k$.

Thus, \eqref{nondeg} has a unique solution on $N$,
$$
\lambda_i(f)=\sum_j A_{ij} [f,G_j],
$$
 determining the
projection $W_f$ of $Y_f$ to $TN$ with respect \eqref{GG}. From
\[
i_{W_f}d\alpha(\xi)=-df(\xi)+\ds\sum_{j=1}^{2k}\lambda_jdG_j(\xi)=-df(\xi),
\qquad \xi\in T_x N
\]
we obtain that the contact Hamiltonian vector field of $f\vert_N$
reads
\begin{eqnarray*}
Y^*_f &=& \hat{W}_f+fZ\\
&=& Y_f-\sum_i\lambda_i(f) Y_{G_i} - (\alpha(Y_f)-\sum_i
\alpha(\lambda_i(f)Y_{G_i}))Z+fZ\\
&=& Y_f-\sum_i \lambda_i(f) Y_{G_i} -(f-\sum_i\lambda_i(f) G_i)Z+fZ\\
&=& Y_f-\sum_i \lambda_i (f)Y_{G_i}.
\end{eqnarray*}

Finally, let $g$ be a smooth function on $M$. Then
\begin{eqnarray*}
[f\vert_N,g\vert_N]_N &=& Y^*_f(g)-g Z(f)\\
&=& Y_f(g)-g Z(f)-\sum_i \lambda_i(f) Y_{G_i}(g)\\
&=& [f,g]-\sum_i \lambda_i (f)[G_i,g]\\
&=& [f,g]+\sum_{i,j}[G_i,g]A_{ij}[G_j,f].
\end{eqnarray*}
\end{proof}

Now, let us consider a general situation, when the Reeb vector
field $Z$ of $(M,\alpha)$ is not tangent to the contact co-oriented
submanifold $(N,j^{\ast}\alpha)$. Then, analogous to \eqref{GG},
we have the decompositions of $T_x M$ at the points $x\in N$:
\begin{eqnarray}
T_x M &=&\mathcal Z_x \oplus {\mathcal H_M}_x=\mathcal Z_x\oplus
{\mathcal H_N}_x\oplus {\orth_{\mathcal H_M}\mathcal H_N}_x \nonumber \\
&=& \mathcal Z^*_x\oplus  {\mathcal H_N}_x   \oplus
{\orth_{\mathcal H_M}\mathcal H_N}_x=T_x N\oplus {\orth_{\mathcal
H_M}\mathcal H_N}_x,\label{GGG}
\end{eqnarray}
where $\mathcal Z^*$ is a linear subbundle of $TN$ spanned by the
Reeb vector field $Z^*$ of $(N,j^{\ast}\alpha)$. For a given
smooth function $f$ on $M$, the contact Hamiltonian vector field
of $f\vert_N$ reads
$$
Y_f^*=\hat{W}_f+fZ^*=W_f-\alpha(W_f)Z+fZ^*,
$$
where $W_f$ is the projection to $TN$ of $Y_f$ with respect to the
decomposition \eqref{GGG}.

If $N$ is given by \eqref{NG}, then we have the same condition
\eqref{G} on the constraints $G_1,\dots,G_{2k}$ and the same
equations \eqref{dirac2} determining the multipliers
$\lambda_1,\dots,\lambda_{2k}$, while the expression for the
Jacobi bracket \eqref{dirac} is different:
\begin{eqnarray*}
&& [f\vert_N,g\vert_N]_N = Y^*_f(g)-g Z^*(f)\\
&&\qquad = Y_f(g)-\sum_i \lambda_i(f) Y_{G_i}(g)-(f-\sum_i
\lambda_i(f)
{G_i})Z(g)+f Z^*(g)-g Z^*(f)\\
&&\qquad= [f,g]+gZ(f)-fZ(g)+f Z^*(g)-g Z^*(f)-\sum_i \lambda_i(f)
Y_{G_i}(g).
\end{eqnarray*}

\subsection{Integrability of the Reeb flows} Suppose that the Reeb flow
$$
\dot x=Y_1=Z
$$
on a contact manifold $(M^{2n+1},\alpha)$ is integrable by means
of integrals $f_1,f_2,\dots,f_{2n-r}$ satisfying
\eqref{involucija} and let $M_{reg}$ denote an open set foliated on
invariant pre-isotropic tori of dimension $r+1$. In addition,
assume that the system is subjected to the constraints \eqref{NG}
satisfying the conditions of Theorem \ref{opsta}.

Since the Reeb vector field on $N$ is the restriction $Z\vert_N$,
it is clear that, in the case $M_{reg} \cap N$ is an open dense
set of $N$, a generic Reeb trajectory on $N$ will be
quasi-periodic. However, as the example with the Reeb flow on the
Brieskorn manifold suggests (see the next section), the restricted
flow not need to be integrable by means of integrals obtained by
the restrictions of $f_1,f_2,\dots,f_{2n-r}$ to $N$.

Denote the foliation of $M_{reg}$ on invariant pre-isotropic tori
by $\mathcal F$. Then $\mathcal F$ is a $\alpha$-complete
pre-isotropic foliation and we have the following flag of
foliations
\begin{equation}\label{flag}
\mathcal G=\mathcal F \cap \mathcal H_M \,\,\subset\,\, \mathcal
F=\mathcal Z\oplus \mathcal G \,\,\subset \,\,\mathcal E=\mathcal
Z\oplus \orth_{\mathcal H_M}\mathcal G
\end{equation}

Let $\mathcal F_N$ be the foliation given by the level sets
$f_i(x)=c_i$, $i=1,\dots,2n-r$, $x\in N_{reg}$, that is $\mathcal
F_N=\mathcal F\vert_N \cap TN$ (if necessary, in order to obtain a
foliation, we restrict functions to some open dense set of
$N_{reg}$). Further, let $\mathcal G_N=\mathcal F_N\cap\mathcal
H_N=\mathcal G\vert_N \cap\mathcal H_N$ be an isotropic foliation
of $N_{reg}$. Then the Reeb flow is integrable by means of
integrals $f_1\vert_N,\dots,f_{2n-r}\vert_N$ if the distribution
$$
{\mathcal E_N}=\mathcal Z\oplus \orth_{\mathcal H_N}{\mathcal G_N}
$$
is a foliation (see \cite{Jo}). We have
\begin{eqnarray*}
{\mathcal E_N}&=& \mathcal Z\oplus \orth_{\mathcal H_N}{\mathcal
G_N}=\mathcal Z\oplus (\orth_{\mathcal H_M}({\mathcal
G}\cap{\mathcal H_N})\cap{\mathcal H_N})\\
&=& \mathcal Z \oplus
((\orth_{\mathcal H_M}{\mathcal G}+\orth_{\mathcal H_M}{\mathcal H_N})\cap \mathcal H_N)\\
&=& \mathcal Z \oplus \pr_{\mathcal H_N}(\orth_{\mathcal
H_M}\mathcal G)= \pr_{TN}(\mathcal Z\oplus \orth_{\mathcal
H_M}\mathcal G)=\pr_{TN}\mathcal E,
\end{eqnarray*}
where $\pr_{\mathcal H_N}$ and $\pr_{TN}$ denote the projections
with respect to the decompositions of $TM$ given in \eqref{GG}.

In particular, if $\mathcal F\vert_N$ is tangent to $N$
(${\mathcal F_N}=\mathcal F\vert_N$), then $\mathcal F_N$ is
$\alpha$-complete preisotropic foliation and the Reeb flow on $N$
is integrable. Indeed, in this case $\mathcal E_N$ is an
integrable distribution, since $\mathcal G\vert_N\subset \mathcal
H_N$ implies
$$
\pr_{\mathcal H_N}(\orth_{\mathcal H_M}\mathcal G)=\orth_{\mathcal
H_M}\mathcal G \cap \mathcal H_N \qquad \text{and}\qquad
\pr_{TN}\mathcal E =\mathcal E\cap TN.
$$

For example, if the generic Reeb trajectories in $N_{reg}$ are
everywhere dense over the $r+1$-dimensional isotropic tori then it
is obvious that ${\mathcal F_N}=\mathcal F\vert_N$.

\section{Contact flows on Brieskorn manifolds}

In Section 2 we started from a system of identical harmonic
oscillators and gave a simple construction of a contact
hypersurface in $(\R^{2n+2}(q,p), dp\wedge dq)$ with an integrable
Reeb flow associated to the Gelfand--Cetin system on $u(n+1)$.

In this section we are going to a different direction. We also
start from a system of harmonic oscillators \eqref{H0}, but then
change our contact manifold by imposing certain constraints thus obtaining the contact structure on the Brieskorn manifold \cite{Lutz, BG}.

\subsection{Complete contact integrability on a sphere}

In this section we use the identification
$\C^{n+1}\cong\R^{2n+2}$,  $z_j=x_j+iy_j,\enspace (j=0,\dots, n)$.
Let
$$
F(z,\bar{z})=\sum_{j=0}^n|z_j|^2.
$$
Then the unit sphere $S^{2n+1}$ may be expressed as the level
surface $F=1$.

In \cite{Y}, the following basic examples of $K$-contact manifolds
are given:

\begin{prop}
Let $a_j\enspace (j=0,\dots, n)$ be positive real numbers and
$$
\alpha =\ds\frac{i}8\sum_{j=0}^n a_j(z_jd\zkj-\zkj
dz_j)=\frac14\sum_{j=0}^na_j(x_j dy_j-y_j dx_j).
$$
Then $(S^{2n+1},\,\alpha)$ is a co-oriented contact manifold with
the Reeb vector field
\begin{equation}\label{Reeb}
Z=4i\sum_{j=0}^n\frac1{a_j}\left(z_j\dzj-\zkj\dzkj\right).
\end{equation}
\end{prop}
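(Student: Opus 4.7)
The plan is to reduce the contact property of $(S^{2n+1},\alpha)$ to the contact-type criterion recalled in Subsection~2.1, applied to the exact symplectic manifold $(\C^{n+1},d\alpha)$, and then read off the Reeb vector field from formula \eqref{skaliranje}. In particular, I would not verify $j^*\alpha\wedge(d j^*\alpha)^n\neq 0$ directly on $S^{2n+1}$.

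First, I would regard $\alpha$ as a 1-form on the ambient space $\C^{n+1}\cong\R^{2n+2}$ and compute, in real coordinates,
$$
d\alpha=\frac12\sum_{j=0}^n a_j\,dx_j\wedge dy_j,
$$
which is a symplectic form on $\R^{2n+2}$ since all $a_j>0$. Solving $i_E\,d\alpha=\alpha$ then yields the Liouville vector field
$$
E=\frac12\sum_{j=0}^n\Bigl(x_j\frac{\partial}{\partial x_j}+y_j\frac{\partial}{\partial y_j}\Bigr).
$$

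Next, I would set $H=F=\sum_j|z_j|^2$, so that $S^{2n+1}=H^{-1}(1)$. A direct computation gives $E(H)=H$, hence $E(H)|_{S^{2n+1}}=1\neq 0$. By the criterion of Subsection~2.1 this transversality of the Liouville field to $H^{-1}(1)$ shows that $S^{2n+1}$ is a hypersurface of contact type in $(\C^{n+1},d\alpha)$ with respect to $\alpha$, and therefore $(S^{2n+1},j^*\alpha)$ is automatically a co-oriented contact manifold.

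For the Reeb vector field, formula \eqref{skaliranje} gives $Z=X_H/E(H)|_{S^{2n+1}}=X_H|_{S^{2n+1}}$. Solving $i_{X_H}d\alpha=-dH$ with $dH=2\sum_k(x_k\,dx_k+y_k\,dy_k)$ yields
$$
X_H=4\sum_{j=0}^n\frac1{a_j}\Bigl(x_j\frac{\partial}{\partial y_j}-y_j\frac{\partial}{\partial x_j}\Bigr),
$$
and translating to complex coordinates via the identity $x_j\frac{\partial}{\partial y_j}-y_j\frac{\partial}{\partial x_j}=i\bigl(z_j\dzj-\zkj\dzkj\bigr)$ converts this expression into the formula \eqref{Reeb}. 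The only delicate point is the bookkeeping between real and complex coordinates; all sign and factor choices are forced, so no substantial obstacle is expected.
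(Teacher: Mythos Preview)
Your argument is correct. The paper does not give a formal proof of this proposition --- it is quoted as a known result from \cite{Y}. However, the paper immediately remarks that the coordinate change \eqref{promena} carries the contact ellipsoid $(E_{1/4},\alpha_0)$ of Subsection~2.6 onto $(S^{2n+1},\alpha)$, which amounts to the same contact-type-hypersurface argument you give: in Subsection~2.6 the Liouville field $E=\tfrac12\sum(q_i\partial_{q_i}+p_i\partial_{p_i})$ is shown to be transverse to $E_h=H_0^{-1}(h)$ and $Z=\tfrac1h X_{H_0}$. Your proof simply carries this out directly in the $(x,y)$ coordinates with $H=F$ rather than passing through the $(q,p)$ ellipsoid and the coordinate transformation, so the two routes are essentially identical in content.
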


Note that the coordinate transformation
\begin{equation}\label{promena}
p_i=\sqrt{\frac{a_i}{2}}x_i, \quad q_i=\sqrt{\frac{a_i}{2}}y_i,
\quad i=0,\dots,n
\end{equation}
transforms the contact ellipsoid $(E_{1/4},\alpha_0)$ to the
contact sphere $(S^{2n+1},\alpha)$. Now, the contact integrability
of the Reeb flow on $(S^{2n+1},\alpha)$ directly follows from
Proposition \ref{prva}. However, it is instructive to have a
direct proof as well.

\begin{prop}\label{sfera}
The flow induced by the Reeb vector field \eqref{Reeb} is completely contact integrable on $(S^{2n+1},\alpha)$ and the functions
\begin{equation}\label{integr}
f_j(z)=|z_j|^2,\qquad j=0,\dots,n
\end{equation}
are its commuting integrals.
\end{prop}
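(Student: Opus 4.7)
The plan is to give a direct, calculational proof following the Jacobi-bracket formalism introduced in Subsection 1.1, avoiding any appeal to Proposition \ref{prva}. There are four steps: (i) verify that each $f_j$ is a Reeb integral, (ii) compute the contact Hamiltonian vector fields $Y_{f_j}$ explicitly, (iii) deduce pairwise commutation, and (iv) count independence.

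First I would check $[1,f_j]=Z(f_j)=0$ by plugging $f_k=z_k\bar z_k$ into \eqref{Reeb}: the only surviving term is the $j=k$ one, which gives $\frac{4i}{a_k}(z_k\bar z_k-\bar z_k z_k)=0$. For step (ii), the natural candidate for $Y_{f_k}$ is the generator of the $k$-th circle rotation,
$$
X_k=i\bigl(z_k\dzj[k]-\bar z_k\dzkj[k]\bigr)=i(z_k\partial_{z_k}-\bar z_k\partial_{\bar z_k}).
$$
A short computation using the explicit $\alpha$ gives $\alpha(X_k)=\tfrac{a_k}{4}f_k$ and, from $d\alpha=\tfrac{i}{4}\sum_j a_j\,dz_j\wedge d\bar z_j$, one finds $i_{X_k}d\alpha=-\tfrac{a_k}{4}df_k$. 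Hence $\mathcal L_{X_k}\alpha=0$, so $X_k$ is a strict contact infinitesimal automorphism. Since $Y_g$ is uniquely determined by $\alpha(Y_g)=g$ together with $i_{Y_g}d\alpha=-\widehat{dg}$, this identifies $X_k=Y_{a_kf_k/4}$, so
$$
Y_{f_k}=\frac{4i}{a_k}\bigl(z_k\partial_{z_k}-\bar z_k\partial_{\bar z_k}\bigr),
$$
which is consistent with the decomposition $Z=\sum_k Y_{f_k}$ of \eqref{Reeb}.

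For step (iii), use \eqref{der} in the form $[f_i,f_j]=Y_{f_i}(f_j)-f_j Z(f_i)$. Since $Z(f_i)=0$, only $Y_{f_i}(f_j)$ remains, and substituting the formula for $Y_{f_i}$ shows it equals $\tfrac{4i}{a_i}\delta_{ij}(z_i\bar z_i-\bar z_i z_i)=0$. So the $f_j$'s pairwise commute in the Jacobi bracket, and each commutes with $1$; this is \eqref{involucija} with $r=n$. For step (iv), on $S^{2n+1}$ the functions satisfy the identity $f_0+f_1+\dots+f_n=1$, so of the $n+1$ differentials only $n$ can be independent; at any point $z$ with all $z_j\neq0$, the restrictions of $df_1,\dots,df_n$ to $TS^{2n+1}$ are linearly independent. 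This gives $2n-r=n$ independent integrals satisfying \eqref{involucija} on the open dense subset where no coordinate vanishes, hence contact complete integrability on that set, with generic invariant pre-Legendrian tori being the $T^{n+1}$-orbits.

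No serious obstacle is expected, since the problem reduces to explicit computations in $\C^{n+1}$-coordinates; the only place that requires attention is making sure the guessed vector field $X_k$ is shown both to lie in the kernel of $\mathcal L_{\cdot}\alpha$-up-to-multiple and to have the right value of $\alpha(X_k)$, in order to identify it unambiguously with $Y_{a_kf_k/4}$. Once that identification is made, all the commutation relations become trivial because the rotations in different coordinate factors commute.
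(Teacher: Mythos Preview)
Your proposal is essentially the paper's own proof: both compute the same candidate $Y_{f_k}=\frac{4i}{a_k}(z_k\partial_{z_k}-\bar z_k\partial_{\bar z_k})$, verify $\alpha(Y_{f_k})=f_k$ and $i_{Y_{f_k}}d\alpha=-df_k$, and then read off $[f_j,f_k]=Y_{f_j}(f_k)=0$, with independence checked on $\{z_0\cdots z_n\neq0\}$. The only step you omit that the paper makes explicit is verifying $dF(Y_{f_k})=0$, i.e.\ that your ambient vector field is tangent to $S^{2n+1}$, which is needed before you can identify it with the contact Hamiltonian on the sphere; this is a one-line check.
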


\begin{proof}
Let
\begin{equation*}
Y_j=\frac{4i}{a_j}\left(z_j\dzj-\zkj\dzkj\right), \qquad
j=0,\dots,n.
\end{equation*}

From
$$
Z f_j=df_j(Z)=(\zkj dz_j+z_jd\zkj)(Z)=\frac{4i}{a_j}(\zkj
z_j-z_j\zkj)=0
$$
and $dF(Y_j)=\frac{4i}{a_j}(\zkj
z_j-z_j\zkj)=0$, we conclude that $f_j$ are integrals of
\eqref{Reeb} and that $Y_j$ are tangent to $S^{2n+1}$. Since
$$
i_{Y_j}d\alpha=-(z_jd\zkj+\zkj dz_j)=-df_j
$$
and $\alpha(Y_j)=f_j$, it follows that $Y_j$ are the contact
Hamiltonian vector fields of the functions $f_j$. Whence, from
\eqref{der}, we obtain
$$
[f_j,f_k]=Y_jf_k=df_k(Y_j)=0, \qquad j,k=0,\dots,n.
$$

Obviously, the functions $f_j$, $j=0,\dots,n$ are independent on
the open dense subset $
 U=\{(z_0,\dots, z_n)\,\vert\,
z_0\cdot\dotso\cdot z_n\neq0\}$ of $\C^{n+1}$,
while the  restrictions of $f_j$, $j=1,\dots,n$ to $S^{2n+1}$ are
independent on $S^{2n+1}\cap U$. Formally, in the polar
coordinates
\begin{equation}\label{polar}
z_j=r_j e^{i\varphi_j}, \qquad j=0,\dots,n,
\end{equation}
we have
$
dF\wedge df_1\wedge\dots\wedge df_n=2^{n+1}r_0r_1\dots r_n
dr_0\wedge dr_1 \wedge \dots \wedge dr_n  \ne 0\vert_U.
$
Therefore, $ df_1\wedge\dotso\wedge df_n\neq0\vert_{S^{2n+1}\cap
U}, $ considered as a $n$-form on $S^{2n+1}$.
\end{proof}

Consider a  $(n+1)$-dimensional invariant torus
\begin{equation}\label{torusi}
 T_c: \quad f_0=c^2_0, \quad \dots, \quad
f_{n}=c^2_{n}, \qquad c^2_0+c^2_1+\dots+c^2_n=1
\end{equation}
that lays in $S^{2n+1}\cap U$ ($c_0,\dots,c_n>0$). After a
coordinate change \eqref{polar}, the Reeb dynamics on $T_c$ is
given by
\begin{equation*}
\dot\varphi_j=\omega_j=\frac{4}{a_j}, \qquad j=0,\dots,n.
\end{equation*}

If $\omega_0,\omega_1,\dots,\omega_n$ (that is,
$a_0,a_1,\dots,a_n$) are independent over $\mathbb Q$, then the
trajectories are everywhere dense over $T_c$. In general, if among
$\omega_0,\omega_1,\dots,\omega_n$ we have $r$ independent
relations
$$
\rho_k: \qquad m^k_0 \omega_0+\dots+m^k_n \omega_n=0, \qquad
m^k_i\in\mathbb Z, \qquad k=1,\dots,r,
$$
the dimension $n+1-r$ of the closure of trajectories laying on
$T_c$ is equal to the rank of $(S^{2n+1},\alpha)$ considered as a
$K$-contact manifold \cite{Y}.

\subsection{Reduction to Brieskorn manifolds}

Now, let $a_j\enspace (j=0,\dots, n)$ be positive integers. Then
the rank of $(S^{2n+1},\alpha)$ is equal to 1, and the Reeb flow
induces a circle action.

Let $G(z)=\sum_{j=0}^n z_j^{a_j}$. The set
$$B = \{z\in\C^{n+1}\, :\, F(z,\bar z)=1,\,
G(z)=0\}
$$ is known as Brieskorn manifold and $(B, \alpha)$ is a
co-oriented contact manifold with the Reeb vector field
\eqref{Reeb} (see \cite{Lutz}). Therefore, obviously, the system
is integrable in a noncommutative sense.

Note that
$$
Z(G)=4i\cdot G,
$$
implies
\begin{equation}\label{G0}
[G_1,1]=[G_2,1]=0\vert_B,
\end{equation}
as well as $[G_1,G_2]\ne 0\vert_B$ (see \eqref{nenula}). Here, by
$G_1$ and $G_2$ we denoted the real and the imaginary part of $G$:
\begin{eqnarray*}
&&G_1(z,\bar
z)=\frac12\sum_{j=0}^n\big(z_j^{a_j}+{\bar{z}_j}^{a_j}\big)=\Re(G),\\
&&G_2(z,\bar
z)=\frac1{2i}\sum_{j=0}^n\big(z_j^{a_j}-{\bar{z}_j}^{a_j}\big)=\Im(G).
\end{eqnarray*}

Therefore, the construction presented in Theorem \ref{opsta}
provides an alternative proof of the fact that $(B, \alpha)$ is a
co-oriented contact manifold.

In what follows we shall describe the Jacobi brackets $[f,g]_B$
within the Lie algebra of integrals of the Reeb flow.

\begin{lem}\label{pom}
Define
\begin{eqnarray*}
&&V_1=2i\sum_{j=0}^n\left({\bar{z}_j}^{a_j-1}\dzj-z_j^{a_j-1}\dzkj\right),\\
&&V_2=-2\sum_{j=0}^n\left({\bar{z}_j}^{a_j-1}\dzj+z_j^{a_j-1}\dzkj\right).
\end{eqnarray*}
Then $V_1(z),\, V_2(z)$ are tangent to $S^{2n+1}$ for all $z\in
B$. The restrictions to $B$ of the contact Hamiltonian vector
fields $Y_{G_j}$ on $(S^{2n+1},\alpha)$ are given by $\hat
V_j\vert_B$, $j=1,2$.
\end{lem}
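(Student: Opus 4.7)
My plan is to verify the two claims by direct computation in the ambient space $\C^{n+1}$, exploiting that $G_1 = G_2 = 0$ on $B$ and that $F=\sum_j z_j\bar z_j$ defines the sphere.

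For tangency of $V_j$ to $S^{2n+1}$ along $B$, I would apply $V_j$ to $F$. A short computation gives
$$V_1(F) = 2i\sum_j(\bar z_j^{a_j}-z_j^{a_j}) = 4G_2, \qquad V_2(F) = -2\sum_j(\bar z_j^{a_j}+z_j^{a_j}) = -4G_1,$$
both of which vanish on $B$.

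For the identification with the contact Hamiltonian vector fields, I would first establish the ambient identity $i_{V_j}d\alpha = -dG_j$ on $\C^{n+1}$ using the expression $d\alpha = \frac{i}{4}\sum_j a_j\,dz_j\wedge d\bar z_j$; this is a routine bilinear calculation. Restricting to $B$ and using that $V_j\vert_B$ is tangent to $S^{2n+1}$, the relation $i_{V_j}d\alpha = -dG_j$ still holds on $TS^{2n+1}$ along $B$. Next, since $G_j\vert_B \equiv 0$, the decomposition \eqref{iso} gives $Y_{G_j}\vert_B = \hat Y_{G_j}\vert_B$. From $Z(G) = 4iG$ I read off $Z(G_1) = -4G_2$ and $Z(G_2) = 4G_1$, both vanishing on $B$, so $dG_j\vert_B$ is already semi-basic: $dG_j\vert_B = \widehat{dG_j}\vert_B$. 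Combining, the horizontal vector fields $\hat V_j$ and $\hat Y_{G_j}$ on $S^{2n+1}$ both satisfy $i_X d\alpha = -\widehat{dG_j}$ along $B$, and so they coincide by non-degeneracy of $d\alpha\vert_{\mathcal H}$.

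The only substantive calculation is the contraction identity $i_{V_j}d\alpha = -dG_j$; the remainder is just organizing the Reeb decomposition and observing that the obstructing terms (namely $G_j$ itself and $Z(G_j)$) all vanish on $B$ thanks to $G_1=G_2=0$ there.
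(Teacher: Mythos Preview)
Your proof is correct and follows essentially the same approach as the paper: both verify tangency by applying $V_j$ to $F$ and then establish the ambient identity $i_{V_j}d\alpha=-dG_j$ by direct contraction with $d\alpha=\frac{i}{4}\sum_j a_j\,dz_j\wedge d\bar z_j$. Your write-up is in fact more explicit than the paper's, which stops after computing $i_{V_1}d\alpha=-dG_1$; you spell out the remaining logic (that $G_j\vert_B=0$ kills the $G_jZ$ term, that $Z(G_j)\vert_B=0$ makes $dG_j$ semi-basic, and that nondegeneracy of $d\alpha$ on $\mathcal H$ then forces $\hat V_j=\hat Y_{G_j}$), and you also identify the tangency obstructions as $4G_2$ and $-4G_1$ rather than just noting they vanish on $B$.
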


\begin{proof}
We have
$$
dF(V_1)=2i\sum_{j=0}^n\big({\bar{z}_j}^{a_j}-z_j^{a_j}\big)=0\vert_B,\qquad
dF(V_2)=-2\sum_{j=0}^n\big({\bar{z}_j}^{a_j}+z_j^{a_j}\big)=0\vert_B,
$$
which proves the first assertion. Now,
\[
\begin{array}{rcl}
i_{V_1}d\alpha &=& \ds\frac{i}4\sum_{j=0}^na_j dz_j\wedge d\zkj(V_1,\cdot)\\[2ex]
&=& \ds\frac{i}4\sum_{j=0}^na_j\big(dz_j(V_1)d\zkj-d\zkj(V_1)dz_j\big)\\[2ex]
&=& \ds-\frac12\sum_{j=0}^na_j\big({\bar{z}_j}^{a_j-1}d\zkj+{z_j}^{a_j-1}dz_j\big)\\[2ex]
&=& -dG_1,
\end{array}
\]
and similarly for $dG_2$.
\end{proof}

\begin{thm}\label{treca}
Let $f$ and $g$ be integrals of the Reeb vector field
\eqref{Reeb}. Then
\[
[f,g]_B=[f,g]+\ds\frac{df(V_2)dg(V_1)-df(V_1)dg(V_2)}\mu,
\]
where $[\cdot,\,\cdot]_B$ is the Jacobi bracket on $(B,\alpha)$,
$[\cdot,\,\cdot]$ is the Jacobi bracket on $(S^{2n+1},\alpha)$ and
$$
\mu=2\sum_{j=0}^na_j|z_j|^{2(a_j-1)}=2\sum_{j=0}^n a_j
f_j^{a_j-1}\neq0.
$$
\end{thm}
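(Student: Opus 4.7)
The plan is to apply Dirac's contact reduction (Theorem \ref{opsta}) to the ambient contact manifold $(S^{2n+1},\alpha)$ with the two constraints $G_1,G_2$ defining $B$. The statement of Theorem \ref{treca} is then precisely the Jacobi-bracket formula \eqref{dirac} specialized to this situation, with the matrix $A_{ij}$ and the brackets $[G_i,\cdot]$ made explicit in terms of the vector fields $V_1,V_2$ of Lemma \ref{pom} and the function $\mu$.

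First I verify the hypotheses of Theorem \ref{opsta}(a): the condition $[1,G_j]=0\vert_B$ is exactly \eqref{G0}, and the nondegeneracy $\det([G_i,G_j])\ne 0\vert_B$ reduces by antisymmetry to evaluating the single entry $c:=[G_1,G_2]\vert_B$. Using $G_i\vert_B=0$, together with $Z(G_1)=-4G_2$ and $Z(G_2)=4G_1$ (both of which vanish on $B$), the Jacobi bracket $[G_1,G_2]=Y_{G_1}(G_2)-G_2 Z(G_1)$ collapses on $B$ to $\hat Y_{G_1}(G_2)\vert_B$, which by Lemma \ref{pom} equals $\hat V_1(G_2)\vert_B$. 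Since $\hat V_1=V_1-\alpha(V_1)Z$ and $Z(G_2)\vert_B=0$, this reduces further to $V_1(G_2)\vert_B$. A direct differentiation using the explicit expressions for $V_1$ and $G_2$ gives $V_1(G_2)=2\sum_j a_j|z_j|^{2(a_j-1)}=\mu$, which is strictly positive on $B$ because the $z_j$ cannot all vanish on $S^{2n+1}$.

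Second, the antisymmetric $2\times 2$ matrix with off-diagonal entry $[G_1,G_2]\vert_B=\mu$ inverts to $A_{11}=A_{22}=0$, $A_{12}=-1/\mu$, $A_{21}=1/\mu$. For the two Reeb integrals $f,g$ (so $Z(f)=Z(g)=0$), exactly the same two-step reduction yields $[G_i,g]\vert_B=\hat V_i(g)\vert_B=V_i(g)\vert_B=dg(V_i)\vert_B$, where the last equality uses $Z(g)=0$ to discard the vertical correction $-\alpha(V_i)Z(g)$ arising from $\hat V_i=V_i-\alpha(V_i)Z$; the analogous identity holds for $f$. Substituting into the Dirac formula \eqref{dirac} leaves only the two cross-terms indexed by $(1,2)$ and $(2,1)$, and these combine to the expression on the right-hand side of Theorem \ref{treca}.

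The argument is essentially mechanical once Lemma \ref{pom} is available; the only delicate point is carefully justifying the successive reductions $Y_{G_i}(g)\vert_B=\hat V_i(g)\vert_B=V_i(g)\vert_B$, which rely on $G_i\vert_B=Z(G_i)\vert_B=0$ and $Z(g)=0$ respectively. The one genuine explicit computation is $V_1(G_2)=\mu$, giving simultaneously the value of the determinant needed to apply Theorem \ref{opsta} and the denominator appearing in the final formula.
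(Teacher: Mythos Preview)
Your approach is correct and essentially identical to the paper's: both invoke Theorem \ref{opsta} for the pair of constraints $G_1,G_2$, identify $[G_1,G_2]\vert_B=\mu$ via Lemma \ref{pom}, and reduce each $[G_i,\,\cdot\,]\vert_B$ to $d(\,\cdot\,)(V_i)$ using $G_i\vert_B=0$, $Z(G_i)\vert_B=0$ and $Z(f)=Z(g)=0$.

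One caution about the very last step. If you literally substitute your (correct) values $A_{12}=-1/\mu$, $A_{21}=1/\mu$ together with $[G_i,g]\vert_B=dg(V_i)$, $[G_j,f]\vert_B=df(V_j)$ into \eqref{dirac} exactly as printed, the correction term comes out as $\bigl(df(V_1)dg(V_2)-df(V_2)dg(V_1)\bigr)/\mu$, i.e.\ with the opposite sign from the statement of Theorem \ref{treca}. The paper's own proof sidesteps this by solving \eqref{dirac2} directly for $\lambda_1=dG_2(Y_f)/\mu$, $\lambda_2=-dG_1(Y_f)/\mu$ and then using $[f,g]_B=[f,g]-\sum_i\lambda_i[G_i,g]$ and $dG_j(Y_f)=-df(V_j)$, which yields the stated sign. (Tracing the derivation in the proof of Theorem \ref{opsta} shows that the displayed formula \eqref{dirac} should carry $A_{ji}$ rather than $A_{ij}$; since $A$ is antisymmetric this is a global sign.) So either solve for the multipliers directly as the paper does, or carry out your final substitution explicitly rather than asserting the outcome.
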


\begin{proof}
Let $Y_f, Y_{G_1}, Y_{G_2}$ be the contact Hamiltonian vector
fields on $(S^{2n+1},\alpha)$ and
$$
W_f=Y_f-\lambda_1 Y_{G_1}-\lambda_2 Y_{G_2}.
$$

Taking into account \eqref{der}, \eqref{G0}, and  Lemma \ref{pom},
we have
\begin{equation}\label{nenula}
[G_1,G_2]=Y_{G_1}G_2=dG_2(Y_{G_1})=dG_2(V_1)=\mu\neq0\vert_B.
\end{equation}

Theorem \ref{opsta} implies that the system
\begin{equation}\label{W}
dG_1(W_f)=dG_2(W_f)=0,
\end{equation}
has a unique solution and $Y^*_f=W_f$ is the contact Hamiltonian
vector field on $(B,\alpha)$. In particular, \eqref{W} leads to
\begin{equation}\label{lambda}
\ds\lambda_1=\frac{dG_2(Y_f)}\mu,\enspace
\lambda_2=-\frac{dG_1(Y_f)}\mu
\end{equation}
and since $Z(f)=Z(g)=0$, we get
\[
\begin{array}{rcl}
[f,g]_B&=& [f,g]+\ds\left(-\frac{dG_2(Y_f)}\mu Y_{G_1}+\frac{dG_1(Y_f)}\mu Y_{G_2}\right)g\\[2ex]
&=& [f,g]+\ds\left(-\frac{dG_2(Y_f)}\mu V_1+\frac{dG_1(Y_f)}\mu
V_2\right)g.
\end{array}
\]

Finally, from
$$
dG_j(Y_f)=-d\alpha(Y_{G_j},
Y_f)=-d\alpha(V_j,Y_f)=d\alpha(Y_f,V_j)=-df(V_j), \quad j=1,2,
$$
we conclude the proof.
\end{proof}

\begin{cor}\label{nekomutiranje}
The integrals \eqref{integr} do not commute on $(B,\alpha)$.
\end{cor}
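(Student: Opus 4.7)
\medskip

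My plan is to apply Theorem~\ref{treca} directly with $f=f_j$ and $g=f_k$ and show that the correction term is not identically zero on $B$. Since $[f_j,f_k]=0$ on $(S^{2n+1},\alpha)$ by Proposition~\ref{sfera}, Theorem~\ref{treca} gives
\[
[f_j,f_k]_B=\frac{df_j(V_2)\,df_k(V_1)-df_j(V_1)\,df_k(V_2)}{\mu},
\]
so the whole problem reduces to evaluating the four pairings $df_j(V_s)$ and exhibiting a single point of $B$ where the resulting expression is nonzero.

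The first step is the computation of $df_j(V_1)$ and $df_j(V_2)$. Using $f_j=z_j\bar z_j$, I have $df_j=\bar z_j\,dz_j+z_j\,d\bar z_j$, and then Lemma~\ref{pom} gives
\[
df_j(V_1)=2i(\bar z_j^{a_j}-z_j^{a_j}),\qquad df_j(V_2)=-2(\bar z_j^{a_j}+z_j^{a_j}).
\]
Writing $z_j^{a_j}=u_j+iv_j$ with $u_j,v_j\in\mathbb R$, these simplify to $df_j(V_1)=4v_j$ and $df_j(V_2)=-4u_j$, and substituting into the formula above yields
\[
[f_j,f_k]_B=\frac{16\,(v_j u_k-u_j v_k)}{\mu}.
\]

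The second step is to show that for some pair $j\ne k$ this function does not vanish identically on $B$. The vanishing of $v_j u_k-u_j v_k$ at a point of $B$ means precisely that $z_j^{a_j}$ and $z_k^{a_k}$ are $\mathbb R$-linearly proportional as complex numbers. Since on $B$ we only have the single complex constraint $\sum_l z_l^{a_l}=0$ and the real constraint $\sum_l|z_l|^2=1$, for $n\geq 2$ one can readily choose $z\in B$ with $z_0^{a_0}\in\mathbb R\setminus\{0\}$ and $z_1^{a_1}\in i\mathbb R\setminus\{0\}$ (adjusting the magnitudes $|z_l|$ to satisfy $\sum|z_l|^2=1$ and using any remaining $z_l^{a_l}$ to cancel $z_0^{a_0}+z_1^{a_1}$ in $G=0$). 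At such a point, $v_1u_0-u_1v_0\ne 0$ and $\mu>0$, so $[f_0,f_1]_B\ne 0$.

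There is no real obstacle here; the one thing to be careful about is the sign conventions and the fact that the $df_j(V_s)$ are naturally complex expressions which must be grouped into the real combinations $u_j,v_j$ before the cancellations in the numerator become transparent. Once that is done, the non-proportionality argument on $B$ is essentially a dimension count.
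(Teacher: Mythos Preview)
Your proof is correct and follows essentially the same approach as the paper: apply Theorem~\ref{treca}, compute $df_j(V_1)=2i(\bar z_j^{a_j}-z_j^{a_j})$ and $df_j(V_2)=-2(\bar z_j^{a_j}+z_j^{a_j})$, and simplify the resulting expression. The paper leaves the answer in the complex form $\tfrac{8i}{\mu}\big[\bar z_j^{a_j}z_k^{a_k}-z_j^{a_j}\bar z_k^{a_k}\big]$ and simply asserts that it is $\neq 0$ for $j\ne k$, whereas you rewrite it as $\tfrac{16}{\mu}(v_ju_k-u_jv_k)$ and give an explicit argument (for $n\ge 2$) that the two powers $z_j^{a_j}$ and $z_k^{a_k}$ need not be real-proportional on $B$; your version is thus slightly more careful on the non-vanishing step, but the substance is the same.
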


\begin{proof}
From
$$
df_j(V_1)=2i\big({\zkj}^{a_j}-z_j^{a_j}\big),\qquad
df_j(V_2)=-2\big({\zkj}^{a_j}+z_j^{a_j}\big)
$$
and $[f_j,f_k]=0$, we obtain
\begin{eqnarray*}
[f_j,f_k]_B &=& \frac{4i}\mu\Big[\big({\zkj}^{a_j}-z_j^{a_j}\big)
\big({{\bar z}_k}^{a_k}+z_k^{a_k}\big)-
\big({\zkj}^{a_j}+z_j^{a_j}\big) \big({{\bar
z}_k}^{a_k}-z_k^{a_k}\big)\Big]\\
&=& \frac{8i}\mu\Big[ \bar z_j^{a_j}z_k^{a_k}-z_j^{a_j}\bar
z_k^{a_k} \Big]\neq0,
\end{eqnarray*}
for $j\neq k$.
\end{proof}

\begin{prop}\label{potpunost}
The complete noncommutative set of integrals of the Reeb flow on
the Brieskorn manifold $(B, \alpha)$ is given by $f_j$,
$[f_j,f_k]_B$, $j,k=0,\dots,n$.
\end{prop}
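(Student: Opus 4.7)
The plan is to verify two things about the proposed collection: \emph{(a)} every function in it is a first integral of the Reeb flow on $(B,\alpha)$, and \emph{(b)} the collection has functional rank $2n-2$ on an open dense subset of $B$. Since $\dim B = 2n-1$ and the Reeb orbits on $B$ are closed circles, the generic invariant pre-isotropic torus has dimension $r+1=1$, so a complete noncommutative set on the contact manifold $(B,\alpha)$ consists of exactly $2(n-1)=2n-2$ functionally independent integrals.

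Part (a) is immediate. Each $f_j|_B$ is Reeb-invariant because $Z(f_j)=0$ on $S^{2n+1}$ by Proposition \ref{sfera} and $Z$ is tangent to $B$; each $[f_j,f_k]_B$ is then automatically Reeb-invariant, since---by the property of the Jacobi bracket recalled after \eqref{der}---the space of Reeb integrals on $B$ is closed under $[\cdot,\cdot]_B$.

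For (b) I would pass to polar coordinates $z_j = r_j e^{i\varphi_j}$ on the open dense subset $B\cap U$ and set $\theta_j = a_j\varphi_j$; the Reeb flow then acts as the common translation $\theta_j\mapsto\theta_j+4t$. Using Corollary \ref{nekomutiranje},
$$
[f_j,f_k]_B \;=\; -\frac{16\, r_j^{a_j} r_k^{a_k}}{\mu}\,\sin(\theta_k-\theta_j),
$$
so each bracket factors as a nonvanishing radial coefficient (by Theorem \ref{treca}, $\mu>0$) times a sine of a Reeb-invariant angle difference. On $B\cap U$ I would introduce local coordinates $(r_1,\dots,r_n,\theta_0,\psi_1,\dots,\psi_{n-2})$ with $\psi_k=\theta_k-\theta_0$, $r_0 = \sqrt{1-\sum_{j\geq 1} r_j^2}$, and $(\theta_{n-1},\theta_n)$ determined from $G_1=G_2=0$ by the implicit function theorem: the Jacobian $\partial(G_1,G_2)/\partial(\theta_{n-1},\theta_n)$ has determinant $r_{n-1}^{a_{n-1}} r_n^{a_n}\sin(\theta_n-\theta_{n-1})$, nonzero at a generic point. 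The Reeb direction is $\partial/\partial\theta_0$, so a complete set of Reeb integrals corresponds to a local diffeomorphism from the $(r_j,\psi_k)$-coordinates to $\R^{2n-2}$.

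The main technical step is this rank calculation, and with the above coordinates it is transparent: the map
$$
(r_1,\dots,r_n,\psi_1,\dots,\psi_{n-2})\;\longmapsto\;(f_1,\dots,f_n,[f_0,f_1]_B,\dots,[f_0,f_{n-2}]_B)
$$
has block-triangular Jacobian with radial diagonal block $\diag(2r_1,\dots,2r_n)$ and angular diagonal block whose $k$-th entry is $-\frac{16\, r_0^{a_0} r_k^{a_k}}{\mu}\cos\psi_k$. Both blocks are invertible on the open dense locus $\{r_j>0,\ \cos\psi_k\neq 0\}$, delivering the required rank $2n-2$. The remaining data in the collection---namely $f_0 = 1-\sum_{j\geq 1}f_j$ and the brackets $[f_j,f_k]_B$ for other pairs $(j,k)$---are then functionally dependent on this chosen subset, confirming that the full collection is indeed a complete noncommutative set.
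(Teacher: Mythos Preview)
Your argument is correct. Part (a) is handled exactly as in the paper, and your rank computation in part (b) is valid: the coordinate chart $(r_1,\dots,r_n,\theta_0,\psi_1,\dots,\psi_{n-2})$ on $B$ is legitimate on the open set where $r_j>0$ and $\sin(\theta_n-\theta_{n-1})\ne 0$, the Reeb direction is $\partial/\partial\theta_0$, and the block-triangular Jacobian you write down is indeed invertible on the further open dense locus $\{\cos\psi_k\ne 0\}$.

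The paper takes a different route to (b). Rather than parametrising $B$ directly, it observes that the functions $f_{j,k}=\tfrac{\mu}{8}[f_j,f_k]_B=i\bigl(\bar z_j^{a_j}z_k^{a_k}-z_j^{a_j}\bar z_k^{a_k}\bigr)$ extend to all of $S^{2n+1}$, and shows that on each torus $T_c\subset S^{2n+1}$ the $f_{j,k}$ restrict to $n$ independent functions (essentially the sines of the $n$ independent resonance angles $a_k\varphi_k-a_j\varphi_j$). Together with the $n$ radial functions $f_j$ this gives $2n$ independent integrals on $S^{2n+1}$, so their common level sets there are precisely the Reeb circles; since $B$ is a Reeb-invariant submanifold of codimension $2$, the restricted level sets in $B$ are still the Reeb circles, forcing rank $2n-2$ on $B$. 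The paper's approach thus avoids the implicit-function-theorem chart on $B$ entirely by lifting the rank question to the ambient sphere, which makes the argument shorter and reuses Proposition~\ref{sfera} directly. Your approach, by contrast, is fully intrinsic to $B$ and makes the independence completely explicit via a single Jacobian determinant; it would also transfer more readily to situations where no convenient ambient contact manifold is available.
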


\begin{proof}
The integrals $f_j$ and
$$
f_{j,k}=\frac{\mu}8[f_j,f_k]_B=i\Big[ \bar
z_j^{a_j}z_k^{a_k}-z_j^{a_j}\bar z_k^{a_k} \Big]
$$
provide a complete set of noncommuting integrals for the Reeb flow
on $S^{2n+1}$ (their level sets are the Reeb circles). Therefore,
after restricting them to the Brieskorn manifold $B$, we get a
complete set of integrals for a Reeb flow on $B$.

Indeed, consider a torus \eqref{torusi} and a coordinate change
\eqref{polar}. The integrals $f_{j,k}$, restricted to $T_c$,
$$
f_{j,k}\vert_{T_c}=i c_j^{a_j}
c_k^{a_k}\Big[e^{i(a_k\varphi_k-a_j\varphi_j)}-e^{-i(a_k\varphi_k-a_j\varphi_j)}
\Big],
$$
correspond to the rational relations
$$
\rho_{j,k}:  \quad m_j \omega_j+m_k \omega_k=0,\quad m_j=a_j,
\quad m_k=a_k.
$$

Since among $\rho_{j,k}$ we have $n$ independent relations, among
$f_{j,k}\vert_{T_c}$ we have $n$ independent functions. Thus,
among $f_j, f_{j,k}$ we have $2n$ independent functions on
$S^{2n+1}$, that is $2n-2$ independent functions on $B$.
\end{proof}

In \cite{U}, Ustilovsky studied the Brieskorn manifolds $(B_p,
\alpha_p)$ with
\begin{equation}\label{ustilovsky}
a_0=p, \quad a_1=\dots=a_n=2,
\end{equation}
where $n=2m+1$ and $p\equiv\pm 1\ (\mod 8)$. The manifold $B_p$ is
diffeomorphic to a standard sphere $S^{4m+1}$ \cite{Br}, and as
shown in \cite{U}, for $p_1\ne p_2$, the contact structures
$\mathcal H_{p_1}=\ker\alpha_{p_1}$ and $\mathcal
H_{p_1}=\ker\alpha_{p_1}$ are not isomorphic. The proof is based
on the study of periodic trajectories of the Reeb flow of the
perturbed contact form $\frac{1}{H}\alpha_p$, which is equal to
the contact flow
\begin{equation}\label{BU}
\dot z=Y^*_H
\end{equation}
on $(B_p, \alpha_p)$, where
\begin{eqnarray*}
&& H=F+\sum_{j=1}^m \epsilon_j g_j, \qquad 0<\epsilon_j<1, \quad
j=1,\dots,m,\\
&& g_j=i(\bar z_{2j}z_{2j+1}-z_{2j}\bar
z_{2j+1})=2(y_{2j}x_{2j+1}-y_{2j+1}x_{2j}).
\end{eqnarray*}

From the point of view of integrability, we can consider the contact
flow of $H$ as an integrable perturbation of the Reeb flow.

\begin{prop} \label{UB}
The contact flow \eqref{BU} is completely integrable in a
noncommutative sense. Generic invariant pre-isotropic tori are of
dimension $m+1$, spanned by the Reeb flow and the contact flows of
integrals $g_1,\dots,g_m$.
\end{prop}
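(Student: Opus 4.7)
The plan is to exhibit $Y^*_H$ as a constant-coefficient linear combination of commuting contact Hamiltonian vector fields that jointly generate a $\T^{m+1}$-action on $B_p$, and to draw all integrals from the algebra of $\T^{m+1}$-invariants.

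The first step is to verify that $F$ and each $g_j$ lie in $C^\infty_\alpha(S^{2n+1})$. In Ustilovsky's case $a_{2j}=a_{2j+1}=2$ for $j\ge 1$, so the two Reeb rotation frequencies in each pair $(z_{2j},z_{2j+1})$ coincide; applying \eqref{Reeb} term by term gives $Z(g_j)=0$ and $Z(F)=0$, hence $Z(H)=0$. Since $F\equiv 1$ on $B_p$, the Hamiltonian restricts to $H|_{B_p}=1+\sum_j\epsilon_j g_j|_{B_p}$, and using $Y^*_c=cZ^*$ for constants together with the linearity of $f\mapsto Y^*_f$ yields
\[
Y^*_H = Z^* + \sum_{j=1}^{m}\epsilon_j Y^*_{g_j}.
\]

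The second step is $[g_j,g_k]_B=0$ for all $j,k\in\{1,\dots,m\}$. Applying Theorem \ref{treca}, a short computation (using $a_\ell-1=1$ for $\ell\ge 1$ and the antisymmetric form of $g_j$) shows that the four terms in $dg_j(V_\nu)$ cancel pairwise, so $dg_j(V_1)=dg_j(V_2)=0$ and the correction term of Theorem \ref{treca} vanishes; therefore $[g_j,g_k]_B=[g_j,g_k]_{S^{2n+1}}$. On the sphere, the relevant summand of $\alpha$ is $\tfrac12$ of the standard Liouville form on the $\C^2$-factor $(z_{2j},z_{2j+1})$, which is invariant under the $SO(2)$-rotation of that factor; the moment map of this rotation is exactly $g_j$, so $Y_{g_j}$ generates it. Since distinct $g_j,g_k$ act on disjoint coordinate pairs, $Y_{g_j}(g_k)=0$, and combined with $Z(g_k)=0$ formula \eqref{der} gives $[g_j,g_k]_{S^{2n+1}}=0$.

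By the Lie-algebra isomorphism $[Y^*_f,Y^*_g]=Y^*_{[f,g]_B}$ from Subsection 1.1 the fields $Z^*,Y^*_{g_1},\dots,Y^*_{g_m}$ pairwise commute; each generates a circle action (the $U(1)$-rotation of $(z_{2j},z_{2j+1})$ preserves $G_1$ and $G_2$ because $z_{2j}^2+z_{2j+1}^2$ is $SO(2)$-invariant, so it descends to $B_p$), and jointly they define a $\T^{m+1}$-action whose generic orbit is an $(m+1)$-dimensional torus. The bracket vanishings combined with \eqref{der} and $Z(g_j)=0$ give $d\alpha_p(Y^*_{g_j},Y^*_{g_k})=0$, so these orbits are pre-isotropic, and $Y^*_H$ moves linearly along each of them.

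For contact noncommutative integrability one needs $3m$ functionally independent integrals of $Y^*_H$ with the first $m$ Abelian. I would take $f_1=H,\,f_2=g_1,\dots,f_m=g_{m-1}$: since $\epsilon_m\ne 0$ these span the same algebra as $g_1,\dots,g_m$ and form $m$ independent functions. The remaining $2m$ integrals come from a dimension count: the quotient of the regular stratum of $B_p$ by the $\T^{m+1}$-action has dimension $\dim B_p-(m+1)=3m$, so on an open dense set there exist $3m$ functionally independent $\T^{m+1}$-invariants, of which $m$ are $g_1,\dots,g_m$ themselves and the other $2m$ can be taken as $f_{m+1},\dots,f_{3m}$. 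Any $\T^{m+1}$-invariant $f$ satisfies $Y^*_H(f)=Z^*(f)+\sum\epsilon_j Y^*_{g_j}(f)=0$ and, by \eqref{der}, $[g_j,f]_B=Y^*_{g_j}(f)-fZ^*(g_j)=0$, so all the involutivity conditions \eqref{involucija} hold; the generic invariant level set then coincides with the ambient $\T^{m+1}$-orbit, giving the stated pre-isotropic tori of dimension $m+1$. The trickiest step is the abstract existence of the additional $2m$ invariants; an explicit description via a Hilbert basis for the $\T^{m+1}$-action on $\C^{n+1}$ would be illuminating but is not needed for the integrability assertion itself.
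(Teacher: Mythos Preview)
Your proof is correct but follows a genuinely different route from the paper's. Both arguments share the key computation $dg_j(V_1)=dg_j(V_2)=0$, which via Theorem~\ref{treca} transfers the commutation $[g_j,g_k]=0$ from $S^{2n+1}$ to $B_p$ (and, in your version, shows $Y^*_{g_j}=Y_{g_j}\vert_{B_p}$ so that each $Y^*_{g_j}$ generates the $SO(2)$-rotation of the pair $(z_{2j},z_{2j+1})$ on $B_p$). From there the two proofs diverge. The paper writes down explicit integrals $g_j$, $h_j=f_{2j}+f_{2j+1}$, $f_0$, $f_1$ and several of their Jacobi brackets on $B_p$, checks the required commutations using Proposition~\ref{prva} on the sphere together with Theorem~\ref{treca}, and then verifies that $3m$ of these are independent by computing a nonvanishing wedge product in polar coordinates. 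You instead observe that $Z^*,Y^*_{g_1},\dots,Y^*_{g_m}$ generate a $\T^{m+1}$-action on $B_p$ with $Y^*_H$ a constant-coefficient combination of the generators, and obtain the needed $3m$ integrals abstractly as $\T^{m+1}$-invariants via a dimension count on the orbit space; this is essentially the $K$-contact argument of Proposition~\ref{K-contact} promoted from the Reeb $\T^1$ to the full $\T^{m+1}$. Your approach is more conceptual and generalizes readily, while the paper's approach has the advantage of producing explicit polynomial integrals and a concrete independence check. Your acknowledged soft spot---the existence of $3m$ independent smooth invariants on an open dense set---is handled by compactness of $\T^{m+1}$ (averaging, or invariant polynomials), exactly as in the proof of Proposition~\ref{K-contact}; it would strengthen the argument to note explicitly that the generators $Z^*,Y^*_{g_1},\dots,Y^*_{g_m}$ are linearly independent at a generic point of $B_p$, so the generic orbit really has dimension $m+1$.
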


\begin{proof}
 Since $f_i, g_j$ are integrals of the system
of harmonic oscillators with conditions \eqref{ustilovsky}, after
the coordinate transformation \eqref{promena}, from Proposition
\ref{prva} we obtain the following commuting relations on
$(S^{4m+3},\alpha_p)$:
\begin{eqnarray*}
&& [g_i,g_j]=\{g_j,g_i\}\vert_{E_\frac14}=0, \\
&& [g_j,1]=\{H_0,g_j\}\vert_{E_\frac14}=0, \\
&& [f_0,g_j]=\{g_j,f_0\}\vert_{E_\frac14}=0, \\
&& [f_1,g_j]=\{g_j,f_1\}\vert_{E_\frac14}=0, \\
&& [h_i,g_j]=\{g_j,h_i\}\vert_{E_\frac14}=0,
\end{eqnarray*}
where $h_j=f_{2j}+f_{2j+1}$, $j=1,\dots,m$.

Next, since
\begin{eqnarray*}
dg_j(V_1) &=& i(z_{2j+1}d\bar z_{2j}+\bar z_{2j}dz_{2j+1}-
z_{2j+1}d\bar z_{2j}-\bar z_{2j}dz_{2j+1})(V_1)\\
 &=& -2(-z_{2j+1}z_{2j}+\bar z_{2j}\bar z_{2j+1}+z_{2j+1}z_{2j}-\bar z_{2j}\bar
 z_{2j+1})=0,
\end{eqnarray*}
\begin{eqnarray*}
dg_j(V_2) &=& i(z_{2j+1}d\bar z_{2j}+\bar z_{2j}dz_{2j+1}-
z_{2j+1}d\bar z_{2j}-\bar z_{2j}dz_{2j+1})(V_2)\\
 &=& -2i(z_{2j+1}z_{2j}+\bar z_{2j}\bar z_{2j+1}-z_{2j+1}z_{2j}-\bar z_{2j}\bar
 z_{2j+1})=0,
\end{eqnarray*}
from Theorem \ref{treca} we get the commuting relations on
$(B_p,\alpha_p)$ as well:
\begin{eqnarray*}\label{CU}
&& [g_i,g_j]_{B_p}=0, \quad [f_0,g_j]_{B_p}=0, \quad
[f_1,g_j]_{B_p}=0, \\
&& [h_i,g_j]_{B_p}=0,  \quad  [g_j,1]_{B_p}=0.
\end{eqnarray*}

Within the set of integrals of the Reeb flow, the Jacobi bracket
of two integrals of the system \eqref{BU} is both the integral of
\eqref{BU} and of the Reeb flow. Thus, we have a set of integrals
\begin{equation}\label{integrali-U}
g_j, \quad h_j=f_{2j}+f_{2j+1}, \quad f_0, \quad f_1, \quad
[f_0,h_j]_{B_p}, \quad [f_1,h_j]_{B_p}, \quad [h_i,h_j]_{B_p},
\end{equation}
that commute with $g_k$, for all $i,j,k=1,\dots,m$.

For the noncommutative integrability, it remains to note that
among the integrals \eqref{integrali-U} we have $3m$ independent
functions, including $g_1,\dots,g_m$. Indeed,
\begin{equation}\label{maple}
dG_1\wedge dG_2\wedge dF\wedge dg_1\wedge \dots \wedge dg_m\wedge
dh_1 \wedge \dots \wedge dh_m \wedge dq_1\wedge \dots \wedge
dq_m\ne 0
\end{equation}
holds on an open dense subset of $B_p$. Here, since $\mu$ is an
integral of the system, instead of $[f_1,h_j]_{B_p}$ we consider as in Proposition \ref{potpunost} the integrals
$$
q_j=\frac{\mu}8[f_1,h_j]_{B_p}=i\Big[ \bar
z_1^{2}z_{2j}^{2}-z_1^{2}\bar z_{2j}^{2}\Big]+i\Big[ \bar
z_1^{2}z_{2j+1}^{2}-z_1^{2}\bar z_{2j+1}^{2}\Big], \quad
j=1,\dots,m.
$$

The relation \eqref{maple} can be verified by straightforward
calculations in the polar coordinates \eqref{polar}.
\end{proof}

\section{Note on $K$-contact manifolds}

The integrability of the Reeb flow on a sphere (Proposition
\ref{sfera}) is a particular case of the integrability of the Reeb
flows on compact $K$-contact manifolds.

Let $(M,\alpha)$ be a $(2n+1)$-dimensional co-oriented contact
manifold. Then $d\alpha$ induces a symplectic structure on the
contact distribution $\mathcal H=\ker\alpha$. In this situation,
there exist a positive definite metric $g_\mathcal H$ and an
almost complex structure $J$ on $\mathcal H$, such that
$$
g_\mathcal H(X,Y)=d\alpha(X,JY), \quad g_\mathcal
H(JX,JY)=g_\mathcal H(X,Y),
$$
for all horizontal vector fields $X$ and $Y$.

The metric $g:= g_\mathcal H\oplus(\alpha\otimes\alpha)$ on $M$ is
called {\it adapted metric} to the contact form $\alpha$. If
there exists an adapted metric $g$, such that the Reeb vector field
is a Killing vector field
$$
\mathcal L_Z g=0,
$$
then we call $(M,\alpha,g)$ a {\it $K$-contact manifold}. For
example, the Sasakian manifolds have a natural $K$-contact
structure (e.g., see \cite{BG}).

The following statement is a simple modification of Proposition
2.1 in Yamazaki \cite{Y}.

\begin{prop}\label{K-contact}
The Reeb flow on a compact $K$-contact manifold $(M,\alpha,g)$ is
(noncommutative) contact integrable.
\end{prop}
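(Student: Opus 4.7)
The plan is to exploit compactness to upgrade the Reeb flow to a torus action and then to read off the integrals from the associated contact moment map. Since $Z$ is Killing on the compact Riemannian manifold $(M,g)$, the closure of the one-parameter group $\{\phi_t^Z\}_{t\in\mathbb{R}}$ in $\mathrm{Isom}(M,g)$ is a compact connected abelian subgroup, hence a torus $\mathbb{T}^k$ for some integer $k\geq 1$. Because $\mathcal{L}_Z\alpha=0$ and preservation of $\alpha$ is a closed condition in $\mathrm{Diff}(M)$, the whole torus acts on $M$ by strict contactomorphisms. Let $Z_1=Z,Z_2,\dots,Z_k$ span its Lie algebra. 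These are commuting contact vector fields, so by \eqref{iso} they take the form $Z_i=Y_{f_i}$ with $f_i=\alpha(Z_i)$, $f_1\equiv 1$. The relations $[Z_i,Z_j]=0$ and $[Z_1,Z_i]=0$ transport through the Lie algebra isomorphism $f\mapsto Y_f$ to the Jacobi relations $[f_i,f_j]=0$ and $[1,f_i]=Z(f_i)=0$.

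Next I would complete the system of integrals by adjoining $\mathbb{T}^k$-invariants. Every smooth $\mathbb{T}^k$-invariant function $h$ is constant on Reeb orbits (their closures are contained in torus orbits), hence an integral of $Z$; moreover $[f_i,h]=Y_{f_i}(h)-hZ(f_i)=Z_i(h)=0$, so each $f_i$ Jacobi-commutes with every such invariant. Writing $Z=\sum c_iZ_i$, the identity $\alpha(Z)=1$ imposes exactly one linear relation $\sum c_if_i\equiv 1$, and on the regular stratum $M_{\mathrm{reg}}$ of the action the remaining $k-1$ functions, say $f_2,\dots,f_k$, are functionally independent. The orbit space $M_{\mathrm{reg}}/\mathbb{T}^k$ is a smooth manifold of dimension $2n+1-k$, so I can choose $2n-2k+2$ additional $\mathbb{T}^k$-invariants $h_1,\dots,h_{2n-2k+2}$ whose differentials together with $df_2,\dots,df_k$ are independent on an open dense subset.

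Setting $r=k-1$, the $2n-r=2n-k+1$ functions $f_2,\dots,f_k,h_1,\dots,h_{2n-2k+2}$ then satisfy \eqref{involucija}. The generic invariant pre-isotropic tori of dimension $r+1=k$ predicted by the integrability theorem coincide with the generic $\mathbb{T}^k$-orbits: transversality to $\mathcal{H}$ is immediate because $Z$ lies in the Lie algebra of $\mathbb{T}^k$ and $\alpha(Z)=1$, while isotropy of the horizontal part of the orbit tangent space follows from
$d\alpha(Z_i,Z_j)=Z_i(\alpha(Z_j))-Z_j(\alpha(Z_i))-\alpha([Z_i,Z_j])=Z_i(f_j)-Z_j(f_i)=0$,
where the last equality uses $[f_i,f_j]=0=Z(f_i)$.

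The main obstacle is the regularity analysis behind the dimension count. One must verify that the contact moment map $\Phi:M\to\mathfrak{t}^*$, $\Phi(x)(\xi)=\alpha_x(\xi_M(x))$, has generic rank $k-1$, so that $f_2,\dots,f_k$ really are independent on $M_{\mathrm{reg}}$, and that the $\mathbb{T}^k$-action admits a principal orbit stratum on which enough invariants $h_\ell$ can be constructed. These follow from standard facts about smooth torus actions on compact manifolds combined with the $K$-contact compatibility condition; once the regularity is in place, the combinatorics above delivers the required noncommutative contact integrability.
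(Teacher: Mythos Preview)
Your argument is correct and follows essentially the same route as the paper: take the closure of the Reeb flow inside the isometry group to obtain a torus $\mathbb{T}^k$ of strict contactomorphisms, use the components of the contact moment map $f_i=\alpha(Z_i)$ as commuting integrals (with $f_1\equiv 1$), and complete the set with $\mathbb{T}^k$-invariant functions, noting that the generic orbits are pre-isotropic $(r+1)$-tori with $r=k-1$. The paper's proof is more terse---it simply declares ``let $f_{r+1},\dots,f_{2n-r}$ be the integrals of $\mathcal{F}$'' without spelling out the orbit-space dimension count or the rank-$(k-1)$ claim for the moment map---whereas you make these steps explicit and flag the regularity verification as the residual technical point; but the underlying mechanism is the same.
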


\begin{proof}
The Reeb flow $\varphi(t)$ is a one-parametric subgroup in a
compact Lie group $G$ of isometries of $(M,g)$. It follows that
the closure of $\{\varphi(t)\,\vert\, t\in\R$\} in $G$ is a torus
$\T^{r+1}$ for some integer $r$. Thus, from
$\varphi(t)^*\alpha=\alpha$, we have $s^*\alpha=\alpha$ for all
$s\in\T^{r+1}$ and $(M,\alpha)$ is a $\T^{r+1}$-contact manifold
(the number $r+1$ is called the {\it rank} of $(M,\alpha,g)$
\cite{Y, BG}).

Let $\xi$ be in the Lie algebra $\mathfrak t^{r+1}$ of $\T^{r+1}$.
It induces a contact vector field $Y_\xi$ on $M$ with a
Hamiltonian $f_\xi=\alpha(Y_\xi)$ ($\mathbf f: M\to (\mathfrak
t^{r+1})^*$, $\mathbf f(\xi):=f_\xi$ is known as a contact
momentum mapping \cite{Le, BG}). Take a base $\xi_0,\dots,\xi_{r}$
of $\mathfrak t^{r+1}$ such that $Z=Y_{\xi_{0}}$, that is
$f_{\xi_0}\equiv 1$. Since $\T^{r+1}$ is Abelian, we have
$[Y_{\xi_i},Y_{\xi_j}]=0$ and $ [f_{\xi_i},f_{\xi_j}]=0$,
$i,j=0,\dots n$.

In a neighborhood of a generic point, the $\T^{r+1}$-action
defines a foliation $\mathcal F$ on invariant $r+1$-dimensional
tori. In particular, according to \eqref{der}, we have
$d\alpha(\hat{Y}_{\xi_i},\hat{Y}_{\xi_j})=0$, so the leaves of
$\mathcal F$ are pre-isotropic and $r\le n$. Let
$f_{r+1},\dots,f_{2n-r}$ be the integrals of $\mathcal F$ defined
on an open dense set of $M$. Since $\mathcal
L_{Y_{\xi_j}}(f_i)=0$, we have
$$
[f_i,1]=0, \quad [f_i,f_{\xi_j}]=0, \quad i=r+1,\dots,2n-r, \quad
j=1,\dots,r,
$$ which completes the proof.
\end{proof}

\subsection*{Acknowledgments}
We are grateful to the referee for useful remarks. The research of
B. J. was supported by the Serbian Ministry of Science Project
174020, Geometry and Topology of Manifolds, Classical Mechanics
and Integrable Dynamical Systems.

\end{document}